\newtheorem{theorem}{Theorem}[section]
\theoremstyle{plain}
\newtheorem{corollary}[theorem]{Corollary}
\newtheorem{example}[theorem]{Example}
\newtheorem{lemma}[theorem]{Lemma}
\newtheorem{proposition}[theorem]{Proposition}
\numberwithin{equation}{section}
\journal{Linear Algebra and its Applications}
\def\gf#1{\mathbb F_{#1}}
\def\df#1#2{\Delta^{\hskip -1pt#1}#2}               
\def\chr#1{\mathrm{char}(#1)}                       
\def\combdeg#1{\mathrm{cdeg}(#1)}
\def\mvar#1{\overline{#1}}
\def\mmon#1#2{\overline{#1}^{\,\overline{#2}}}
\def\var#1{\mathrm{var}(#1)}
\def\weight{\omega}
\def\pl#1#2{\mathcal{P}_{#2}(#1)}                    
\def\tpl#1#2{\mathcal{P}^t_{#2}(#1)}             
\def\dpl#1#2{\mathcal{P}^\equiv_{#2}(#1)}            
\begin{document}

\begin{frontmatter}



\title{Symmetric multilinear forms and polarization of polynomials}


\author[D]{Ale\v{s} Dr\'apal\fnref{D2}}

\address[D]{Department of Algebra, Charles University, Sokolovsk\'a 183,
Praha 186 75, Czech Republic}

\ead{drapal@karlin.mff.cuni.cz}

\fntext[D2]{Supported by institutional grant MSM 0021620839. An early version
of this paper was written during Fulbright research stay at the University of
Wisconsin-Madison.}

\author[V]{Petr Vojt\v{e}chovsk\'y}

\address[V]{Department of Mathematics, University of Denver,
2360 S Gaylord St, Denver, Colorado 80208, U.S.A.}

\ead{petr@math.du.edu}

\begin{abstract}
We study a generalization of the classical correspondence between homogeneous
quadratic polynomials, quadratic forms, and symmetric/alternating bilinear
forms to forms in $n$ variables. The main tool is combinatorial polarization,
and the approach is applicable even when $n!$ is not invertible in the
underlying field.
\end{abstract}

\begin{keyword}
$n$-form, $n$-application, homogeneous polynomial, quadratic form, $n$-linear
form, characteristic form, polarization, combinatorial polarization


\MSC Primary: 11E76. Secondary: 11E04, 11C08, 05E05.

\end{keyword}

\end{frontmatter}



\section{Introduction}

Let $F$ be a field of characteristic $\chr{F}$, and let $V$ be a
$d$-dimensional vector space over $F$. Recall that a \emph{quadratic form}
$\alpha:V\to F$ is a mapping such that
\begin{equation}\label{Eq:QuadraticHomogeneous}
    \alpha(au)=a^2\alpha(u)
\end{equation}
for every $a\in F$, $u\in V$, and such that $\varphi:V^2\to F$ defined by
\begin{equation}\label{Eq:QuadraticPolarization}
    \varphi(u,v) = \alpha(u+v)-\alpha(u)-\alpha(v)
\end{equation}
is a symmetric bilinear form.

The name ``quadratic form'' is justified by the fact that quadratic forms $V\to
F$ are in one-to-one correspondence with homogeneous quadratic polynomials over
$F$. This is a coincidence, however, and it deserves a careful look:

Assume that $\chr{F}\ne 2$. Given a symmetric bilinear form $\varphi:V^2\to F$,
the mapping $\alpha:V\to F$ defined by
\begin{equation}\label{Eq:QuadraticRecovery}
    \alpha(u) = \frac{\varphi(u,u)}{2}
\end{equation}
is clearly a quadratic form satisfying (\ref{Eq:QuadraticPolarization}).
Conversely, if $\alpha$ is a quadratic form with associated symmetric bilinear
form $\varphi$ then (\ref{Eq:QuadraticRecovery}) follows, so $\alpha$ can be
recovered from $\varphi$. Quadratic forms $V\to F$ are therefore in one-to-one
correspondence with symmetric bilinear forms $V^2\to F$. Moreover, upon
choosing a basis $\{e_1,\dots,e_d\}$ of $V$, (\ref{Eq:QuadraticRecovery}) can
be rewritten in coordinates as
\begin{displaymath}
    \alpha(\sum_i a_ie_i) = \sum_{i,j} \frac{a_ia_j}{2}\varphi(e_i,e_j),
\end{displaymath}
showing that $\alpha$ is indeed a homogeneous quadratic polynomial. Every
homogeneous quadratic polynomial is obviously a quadratic form.

Now assume that $\chr{F}=2$. For an alternating bilinear form $\varphi:V^2\to
F$, the homogeneous quadratic polynomial
\begin{equation}\label{Eq:UniqueQuadraticRecovery}
    \beta(\sum_i a_ie_i) = \sum_{i<j}a_ia_j\varphi(e_i,e_j)
\end{equation}
satisfies
\begin{multline*}
    \beta(u+v)-\beta(u)-\beta(v) = \sum_{i<j}(a_ib_j+b_ia_j)\varphi(e_i,e_j)\\
    = \varphi(\sum_i a_ie_i,\sum_j b_je_j) = \varphi(u,v),
\end{multline*}
and thus every alternating bilinear form arises in association with some
quadratic form. Conversely, if $\varphi$ is the symmetric bilinear form
associated with the quadratic form $\alpha$, (\ref{Eq:QuadraticPolarization})
implies that $\varphi$ is alternating. Furthermore, with $\beta$ as in
(\ref{Eq:UniqueQuadraticRecovery}), we see that $\gamma = \alpha-\beta$
satisfies $\gamma(u+v)=\gamma(u)+\gamma(v)$. In particular,
\begin{displaymath}
    \gamma(\sum_i a_ie_i) = \sum_i \gamma(a_ie_i) = \sum_i a_i^2\gamma(e_i),
\end{displaymath}
proving that $\alpha$ is a homogeneous quadratic polynomial. Thus we again have
the desired correspondence between quadratic forms and homogeneous quadratic
polynomials. However, the alternating bilinear form $\varphi$ associated with
$\alpha$ does not determine $\alpha$ uniquely.

\vskip 3mm

The goal of this paper is to investigate generalizations of the three concepts
(quadratic form, homogeneous quadratic polynomial and symmetric resp.
alternating bilinear form) for any number $n$ of variables, giving rise to
polynomial $n$-applications, a class of polynomials of combinatorial degree
$\le n$, and characteristic $n$-linear forms, respectively.

The key insight, which goes back at least to Greenberg \cite{Greenberg}, is the
observation that (\ref{Eq:QuadraticPolarization}) is a special case of the
so-called \emph{polarization} of $\alpha$, but many more concepts and
observations, most of them new, will be required.

The difficulties encountered with quadratic forms over fields of characteristic
two will be analogously encountered for forms in $n$ variables over fields in
which $n!$ is not invertible. There are surprises for $n>3$ (not all
$n$-applications are polynomial) and especially for $n>4$ (not all polynomial
$n$-applications are homogeneous of degree $n$).

Finally, we remark that this paper was not written to mindlessly generalize the
concept of a quadratic form. Rather, it grew from our need to understand why
the prime three behaves differently from all other primes in Richardson's odd
code loops \cite{Richardson}. The reason turned out to be the fact that odd
code loops are connected to trilinear forms satisfying $\varphi(u,u,u)=0$. The
details of this connection to code loops, and thus indirectly to the Monster
group, will be presented separately in a later paper.

\section{Polarization, polynomial mappings, and $n$-applications}

In this paper, a \emph{form} is any mapping $V^n\to F$. A form $f:V^n\to F$ is
\emph{symmetric} if $f(v_1,\dots,v_n) = f(v_{\sigma(1)},\dots, v_{\sigma(n)})$
for every $v_1$, $\dots$, $v_n\in V$ and every permutation $\sigma$ of
$\{1,\dots,n\}$. A symmetric form $f:V^n\to F$ is \emph{$n$-additive} if
$f(u+w,v_2,\dots,v_n) = f(u,v_2,\dots,v_n) + f(w,v_2,\dots,v_n)$ for every $u$,
$w$, $v_2$, $\dots$, $v_n\in V$, and it is \emph{$n$-linear} if it is
$n$-additive and $f(av_1,v_2,\dots,v_n) = af(v_1,v_2,\dots,v_n)$ for every
$a\in F$, $v_1$, $\dots$, $v_n\in V$.

\subsection{Polarization}

Let $\alpha:V\to F$ be a form satisfying $\alpha(0)=0$, and let $n\ge 1$. As in
Ward \cite{Ward}, the \emph{$n$th defect} (also called the \emph{$n$th derived
form}) $\df{n}{\alpha}:V^n\to F$ of $\alpha$ is defined by
\begin{equation}\label{Eq:CombPol}
    \df{n}{\alpha}(u_1,\dots,u_n) = \sum_{1\le i_1<\cdots<i_m\le n}
        (-1)^{n-m}\alpha(u_{i_1}+\cdots+u_{i_m}).
\end{equation}
Then $\df{n}{\alpha}$ is clearly a symmetric form, and it is not hard to see,
using the inclusion-exclusion principle, that the defining identity
(\ref{Eq:CombPol}) is equivalent to the recurrence relation
\begin{align}\label{Eq:Recurrence}
    \df{n}{\alpha}(u_1,\dots,u_n) = \quad &\df{n-1}{\alpha}(u_1+u_2,u_3,\dots,u_n)\notag\\
        \quad-&\df{n-1}{\alpha}(u_1,u_3,\dots,u_n)\\
        \quad-&\df{n-1}{\alpha}(u_2,u_3,\dots,u_n).\notag
\end{align}

If there is a positive integer $n$ such that $\df{n}{\alpha}\ne 0$ and
$\df{n+1}{\alpha}=0$, we say that $\alpha$ has \emph{combinatorial degree} $n$,
and we write $\combdeg{\alpha}=n$. If $\alpha$ is the zero map, we set
$\combdeg{\alpha}=-1$.

Whenever we speak of combinatorial polarization or combinatorial degree of a
form $\alpha:V\to F$, we tacitly assume that $\alpha(0)=0$.

It follows from the recurrence relation (\ref{Eq:Recurrence}) that
$\df{m}{\alpha}=0$ for every $m>\combdeg{\alpha}$. The same relation also shows
that $\combdeg{\alpha}=n$ if and only if $\df{n}{\alpha}\ne 0$ is a symmetric
$n$-additive form. In particular, when $F$ is a prime field,
$\combdeg{\alpha}=n$ if and only if $\df{n}{\alpha}\ne 0$ is a symmetric
$n$-linear form.

Note that combinatorial polarization is a linear process, i.e.,
$\df{n}(c\alpha+d\beta) = c\df{n}{\alpha}+d\df{n}{\beta}$ for every $c$, $d\in
F$ and $\alpha$, $\beta:V\to F$.

In the terminology of Ferrero and Micali \cite{FM}, a form $\alpha:V\to F$ is
an \emph{$n$-application} if
\begin{eqnarray}
    &&\alpha(au) = a^n\alpha(u)
    \text{ for every $a\in F$, $u\in V$, and}\label{Eq:nApp1}\\
    &&\df{n}{\alpha}:V^n\to F\text{ is a symmetric $n$-linear form}.\label{Eq:nApp2}
\end{eqnarray}
Note that (\ref{Eq:nApp1}) and (\ref{Eq:nApp2}) are generalizations of
(\ref{Eq:QuadraticHomogeneous}) and (\ref{Eq:QuadraticPolarization}), that is,
quadratic forms are precisely $2$-applications.

\subsection{Polynomial mappings and $n$-applications}

Let $F[x_1,\dots,x_d]$ be the ring of polynomials in variables $x_1$, $\dots$,
$x_d$ with coefficients in $F$. Denote multivariables by
$\mvar{x}=(x_1,\dots,x_d)$, multiexponents by $\mvar{m} = (m_1,\dots,m_d)$, and
write $\mmon{x}{m}$ instead of $x_1^{m_1}\cdots x_d^{m_d}$. Then every
polynomial $f\in F[\mvar{x}]$ can be written uniquely as a finite sum of
monomials
\begin{displaymath}
    f(\mvar{x}) = \sum c(\mvar{m})\mmon{x}{m},
\end{displaymath}
where $c(\mvar{m})\in F$ for every multiexponent $\mvar{m}$. Finally, let $M(f)
= \{\mvar{m};\;c(\mvar{m})\ne 0\}$ be the set of all multiexponents of $f$.

The \emph{degree of} $f\in F[\mvar{x}]$ is $\deg(f) =
\max\{m_1+\cdots+m_d;\;(m_1,\dots,m_d)\in M(f)\}$.

Define a binary relation $\sim$ on $F[\mvar{x}]$ as follows: For a variable
$x_i$ and exponents $m_i$, $n_i$ let $x_i^{m_i}\sim x_i^{n_i}$ if and only if
either $m_i=n_i$, or $m_i>0$, $n_i>0$ and $m_i-n_i$ is a multiple of $|F|-1$.
(When $F$ is infinite, $m_i-n_i$ is a multiple of $|F|-1$ if and only if
$m_i=n_i$.) Then let $c(\mvar{m})\mmon{x}{m}\sim c(\mvar{n})\mmon{x}{n}$ if and
only if $c(\mvar{m}) = c(\mvar{n})$ and $x_i^{m_i}\sim x_i^{n_i}$ for every
$1\le i\le d$. It is not difficult to see that $\sim$ extends linearly into an
equivalence on $F[\mvar{x}]$.

We call $F[\mvar{x}]/{\sim}$ \emph{reduced polynomials}. Given a polynomial
$f\in F[\mvar{x}]$, the equivalence class $[f]_\sim$ contains a unique
polynomial $g$ such that $0\le m_i<|F|$ for every $1\le i\le d$, $\mvar{m}\in
M(g)$. We usually identify $[f]_\sim$ with this representative $g$, and refer
to $g$ as a reduced polynomial, too.

The significance of reduced polynomials rests in the fact that they are
precisely the polynomial functions:

\begin{lemma}\label{Lm:Reduced} Let $f$, $g\in F[\mvar{x}]$. Then $[f]_\sim =
[g]_\sim$ if and only if $f-g$ is the zero function.
\end{lemma}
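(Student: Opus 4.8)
The plan is to reduce to the single-variable case and then exploit that the multiplicative group $F^\times$ has order $|F|-1$. First I would observe that it suffices to prove the statement when $g=0$, i.e.\ that $[f]_\sim=0$ if and only if $f$ induces the zero function on $V^d\cong F^d$; the general case follows by linearity of $\sim$ applied to $f-g$. The ``only if'' direction is the easy half: if $x_i^{m_i}\sim x_i^{n_i}$ with $m_i,n_i>0$ and $m_i-n_i$ a multiple of $|F|-1$, then $a^{m_i}=a^{n_i}$ for every $a\in F$ — this is immediate for $a=0$ and for $a\neq 0$ follows from $a^{|F|-1}=1$ — so equivalent monomials induce the same function, and hence equivalent polynomials do too. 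Thus $[f]_\sim=0$ forces $f$ to be the zero function.

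For the ``if'' direction, I would work with the reduced representative: replace $f$ by the unique $g\in[f]_\sim$ with all exponents in the range $0\le m_i<|F|$, and show that if $g$ is the zero function then $g=0$ as a polynomial. I would argue by induction on the number of variables $d$. For $d=1$: a nonzero reduced polynomial in one variable has degree $<|F|$ (when $F$ is finite) or is a genuine polynomial (when $F$ is infinite, where all exponents must already be equal, so nothing reduces), and in either case a nonzero polynomial of degree less than $|F|$ cannot vanish at all $|F|$ points of $F$. For the inductive step, write $g=\sum_{j=0}^{|F|-1} g_j(x_1,\dots,x_{d-1})\,x_d^{\,j}$ where each $g_j$ is reduced in the first $d-1$ variables; fixing any values of $x_1,\dots,x_{d-1}$ in $F$ yields a reduced single-variable polynomial in $x_d$ that vanishes identically, so by the $d=1$ case each $g_j(x_1,\dots,x_{d-1})$ vanishes for all choices of $x_1,\dots,x_{d-1}$, whence by the inductive hypothesis each $g_j=0$ and so $g=0$.

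The one genuinely delicate point is the infinite-field case, and more precisely making sure the two halves of the relation $\sim$ are handled uniformly: when $F$ is infinite the condition ``$m_i-n_i$ is a multiple of $|F|-1$'' should be read as $m_i=n_i$ (as the paper notes parenthetically), so the reduced representative imposes no bound on exponents other than those already present and the classical fact that a polynomial over an infinite field vanishing everywhere is zero does the job directly. I expect the main obstacle to be purely organizational: setting up the induction so that ``reduced in the first $d-1$ variables'' is preserved when we specialize $x_d$, and conversely that vanishing of the specialized polynomials for \emph{all} substitutions lets us invoke the inductive hypothesis coefficient-by-coefficient. Once the bookkeeping is in place the estimates are the elementary ones above.
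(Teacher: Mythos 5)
Your proof is correct and complete. Note that the paper itself states Lemma \ref{Lm:Reduced} without proof, treating it as a standard fact, so there is no argument in the text to compare against; your induction on the number of variables, with the base case resting on the bound $\deg < |F|$ for reduced one-variable polynomials (respectively, the finiteness of the root set over an infinite field), is exactly the classical argument one would supply. The only cosmetic quibble is that writing $g=\sum_{j=0}^{|F|-1}g_j x_d^{\,j}$ presupposes $F$ finite; for infinite $F$ the sum should simply run over the finitely many exponents occurring in $g$, which your discussion of the infinite case already accommodates.
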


Let $\alpha:V\to F$ be a mapping and $B=\{e_1,\dots,e_d\}$ a basis of $V$. Then
$\alpha$ is a \emph{polynomial mapping with respect to $B$} if there exists a
polynomial $f\in F[\mvar{x}]$ such that
\begin{displaymath}
    \alpha(\sum_i a_i e_i) = f(a_1,\dots,a_d)
\end{displaymath}
for every $a_1$, $\dots$, $a_d\in F$. We say that $f$ \emph{realizes $\alpha$
with respect to $B$}. By Lemma \ref{Lm:Reduced}, there is a unique reduced
polynomial realizing $\alpha$ with respect to $B$.

A change of basis will result in a different polynomial representative for a
polynomial mapping, but many properties of the representative remain intact.

\begin{lemma}\label{Lm:ChOB2}
Let $\alpha:V\to F$ be realized with respect to a basis $B$ of $V$ by some
reduced polynomial $f\in F[\mvar{x}]$. If $B^*$ is another basis of $V$ then
$\alpha$ is realized by some reduced polynomial $f^*\in F[\mvar{x}]$ with
respect to $B^*$ and $\deg(f)=\deg(f^*).$
\end{lemma}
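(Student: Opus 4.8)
The plan is to reduce the statement to the invertibility of the change-of-basis transformation on polynomials and then argue that degree is preserved in both directions. Write $B = \{e_1,\dots,e_d\}$ and $B^* = \{e_1^*,\dots,e_d^*\}$, and let $P = (p_{ij})$ be the invertible matrix over $F$ with $e_j^* = \sum_i p_{ij}e_i$. If $\sum_j a_j e_j^* = \sum_i b_i e_i$ then $b_i = \sum_j p_{ij}a_j$, so the polynomial $h(\mvar{x}) = f(\sum_j p_{1j}x_j,\dots,\sum_j p_{dj}x_j)$ realizes $\alpha$ with respect to $B^*$. Let $f^*$ be the unique reduced polynomial with $[h]_\sim = [f^*]_\sim$; by Lemma~\ref{Lm:Reduced}, $f^*$ realizes $\alpha$ with respect to $B^*$. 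It remains to show $\deg(f^*) = \deg(f)$.

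First I would check $\deg(f^*) \le \deg(f)$. Substituting linear forms into $f$ cannot raise the total degree, so $\deg(h) \le \deg(f)$; and passing to the reduced representative does not increase degree, since reduction only replaces some exponent $m_i$ by a smaller positive one congruent to it modulo $|F|-1$ (when $F$ is infinite, $h$ is already reduced). Hence $\deg(f^*) \le \deg(h) \le \deg(f)$. For the reverse inequality I would apply the same construction in the other direction: $P$ is invertible, so there is a reduced polynomial $g$ realizing $\alpha$ with respect to $B$ obtained from $f^*$ by the substitution coming from $P^{-1}$, and the argument just given yields $\deg(g) \le \deg(f^*)$. But $g$ and $f$ are both reduced polynomials realizing $\alpha$ with respect to $B$, so by the uniqueness clause following Lemma~\ref{Lm:Reduced} we have $g = f$, whence $\deg(f) = \deg(g) \le \deg(f^*)$. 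Combining the two inequalities gives $\deg(f) = \deg(f^*)$.

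The main obstacle is the interaction between the linear substitution and the reduction modulo $\sim$: one must be sure that reducing a polynomial never \emph{increases} its degree. This is where the hypothesis that $f$ (and $f^*$) be reduced, together with the precise definition of $\sim$ — namely that $x_i^{m_i} \sim x_i^{n_i}$ with $m_i \ne n_i$ forces both exponents positive and $|F|-1 \mid m_i - n_i$, so the reduced representative has $0 \le n_i < |F|$ — does the work: every exponent can only drop when we pass to the canonical representative, so the total degree cannot rise. The rest is bookkeeping: the substitution $\mvar{x} \mapsto P\mvar{x}$ is a degree-non-increasing $F$-algebra endomorphism of $F[\mvar{x}]$, and running it with $P$ and then with $P^{-1}$ returns, after reduction, to the original reduced polynomial by uniqueness. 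No deeper input is needed beyond Lemma~\ref{Lm:Reduced}.
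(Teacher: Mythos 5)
Your proof is correct, but it takes a genuinely different route from the paper's. The paper reduces the change of basis to elementary operations ($e_1^*=ce_1$ and $e_1^*=e_1+e_2$) and, for the transvection case, argues that a monomial of maximal degree of $f$ reappears in $f^*$ without being cancelled, which gives $\deg(f^*)\ge\deg(f)$ directly; the reverse inequality is then obtained by symmetry. You instead observe that the two operations involved — substituting linear forms for the variables, and passing to the reduced representative — can each only lower the total degree (the latter because the canonical exponent $n_i$ with $0\le n_i<|F|$ satisfies $n_i\le m_i$), which gives the easy inequality $\deg(f^*)\le\deg(f)$; you then run the construction with $P^{-1}$ and invoke the uniqueness of the reduced realizer from Lemma \ref{Lm:Reduced} to conclude that the result is $f$ itself, yielding $\deg(f)\le\deg(f^*)$. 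Your version is arguably cleaner: it needs no decomposition into elementary matrices and sidesteps the cancellation analysis entirely (which is the delicate point in the paper's argument, since distinct top-degree monomials of $f$ can in fact interfere with one another after the substitution, even though some top-degree term always survives). The only thing the paper's approach buys in exchange is a more explicit, monomial-level picture of what happens to $f$ under an elementary change of basis, which foreshadows the finer bookkeeping used later (e.g.\ in Lemma \ref{Lm:WellDefined}); your argument, by working at the level of functions and uniqueness of reduced representatives, gives the degree equality with less machinery.
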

\begin{proof}
Let $B=\{e_1,\dots,e_d\}$, $B^*=\{e_1^*,\dots,e_d^*\}$, $e_i^* = \sum_j
c_{i,j}e_j$. Then
\begin{multline*}
    \alpha\bigl(\sum_i a_ie_i^*\bigr) = \alpha\bigl(\sum_i a_i\sum_j
    c_{i,j}e_j\bigr)\\
    = \alpha\bigl(\sum_j \bigl(\sum_i a_ic_{i,j}\bigr) e_j\bigr)
    = f\bigl(\sum_i a_ic_{i,1},\dots,\sum_i a_ic_{i,d}\bigr),
\end{multline*}
which is some polynomial $f^*$ in $a_1$, $\dots$, $a_d$.

We clearly have $\deg(f)=\deg(f^*)$ when $e_1^*=c e_1$ for some $c \ne 0$ and
$e_i^*=e_i$ for every $i>1$. We can therefore assume that $e_1^*=e_1+e_2$ and
$e_i^*=e_i$ for every $i>1$. (Every change of basis is a product of these two
types of elementary operations.)

Let $g(\mvar{x}) = \mmon{x}{m}$ be a monomial of $f$ such that
$\deg(g)=\deg(f)$. Then
\begin{equation}\label{Eq:Monom1}
    g\bigl(\sum_i x_ic_{i,1},\dots,\sum_i x_ic_{i,d}\bigr) =
    (x_1+x_2)^{m_1}x_2^{m_2}\cdots x_d^{m_d}
\end{equation}
contains the reduced monomial $g(\mvar{x})$ as a summand that cannot be
cancelled with any other summand of (\ref{Eq:Monom1}), nor any other summand of
$f^*$, due to $\deg(g)=\deg(f)$. This means that $\deg(f^*)\ge
\deg(g)=\deg(f)$, and the other inequality follows by symmetry.
\end{proof}

We say that a mapping $\alpha:V\to F$ is a \emph{polynomial mapping of degree
$n$} if $\alpha$ is realized by a reduced polynomial of degree $n$ with respect
to some (and hence every) basis of $V$.

We have seen in the Introduction that every $2$-application is a polynomial
mapping, in fact a homogeneous quadratic polynomial. It is a fascinating
question whether every $n$-application is a polynomial mapping, and the series
of papers \cite{Pr1}--\cite{Pr5} by Pr\'oszy\'nski is devoted to this question,
albeit in the more general setting of mappings between modules.

Of course, every $n$-application $V\to F$ is a polynomial mapping when $F$ is
finite, since any mapping $V\to F$ is then a polynomial by Lagrange's
Interpolation. Pr\'oszynski proved that any $3$-application is a polynomial
mapping \cite[Theorem 4.4]{Pr1}, and showed after substantial effort that for
every $n>3$ there is an $n$-application over a field of characteristic two that
is not a polynomial mapping \cite[Example 4.5]{Pr4}.

For $n>3$, there is therefore no hope of maintaining the correspondence between
$n$-applications and a certain class of polynomials, unless we restrict our
attention to polynomial $n$-applications.

We present a characterization of polynomials that are $n$-applications in
Section \ref{Sc:PolyApps}. But first we have a look at forms obtained by
polarization.

\section{Characteristic forms}

For all fields $F$ containing the rational numbers, we will find it convenient
to set $\chr{F}=\infty$, rather than the more contemporary $\chr{F}=0$.

Since we will often deal with repeated arguments, we adopt the following
notation from multisets, cf. \cite{Brualdi}: For an integer $r$ and a vector
$u$, we understand by $r*u$ that $u$ is used $r$ times. For instance,
$\varphi(r*u, s*v)$ stands for
\begin{eqnarray*}
    \varphi(&\underbrace{u,\quad\dots,\quad u},&\underbrace{v,\quad\dots,\quad v}\ \ ).\\
    &\textrm{$r$ times}&\quad \textrm{$s$ times}
\end{eqnarray*}

With these conventions in place, a symmetric form $\varphi:V^n\to F$ is said to
be \emph{characteristic} if either $n<\chr{F}$, or $n\ge\chr{F}=p$ and
$\varphi(p*u,v_1,\dots,v_{n-p})=0$ for every $u$, $v_1$, $\dots$, $v_{n-p}\in
V$. Note that every symmetric form in characteristic $\infty$ is
characteristic.

All forms arising by polarization are characteristic:

\begin{lemma}\label{Lm:NotRealized}
Let $\alpha:V\to F$ and $n\ge 1$. Then $\df{n}{\alpha}:V^n\to F$ is a
characteristic form.
\end{lemma}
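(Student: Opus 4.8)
The plan is to reduce the statement to a single elementary computation about the $p$-th forward difference of $\alpha$. There is content only when $\chr{F}$ is finite and $\chr{F}\le n$: if $n<\chr{F}$ (which includes the case $\chr{F}=\infty$) then every symmetric form is characteristic by definition, and $\df{n}{\alpha}$ is symmetric by inspection of (\ref{Eq:CombPol}). So assume $\chr{F}$ is finite, write $p=\chr{F}$, suppose $p\le n$, and fix vectors $u,v_1,\dots,v_{n-p}\in V$. Using the symmetry of $\df{n}{\alpha}$ we may place the $p$ repeated arguments in the first $p$ slots, so the goal becomes $\df{n}{\alpha}(p*u,v_1,\dots,v_{n-p})=0$.

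First I would substitute $u_1=\dots=u_p=u$ and $u_{p+1}=v_1,\dots,u_n=v_{n-p}$ into (\ref{Eq:CombPol}), written as a sum over all subsets $S\subseteq\{1,\dots,n\}$ (harmless since $\alpha(0)=0$). Each $S$ splits into its trace $A$ on $\{1,\dots,p\}$ and its trace $B$ on the remaining indices; since $\sum_{i\in S}u_i=|A|\,u+\sum_{j\in B}v_j$, collecting the $\binom{p}{k}$ sets $A$ of a given size $k$ rearranges the sum into
\begin{displaymath}
    \df{n}{\alpha}(p*u,v_1,\dots,v_{n-p})=\sum_{B}(-1)^{n-p-|B|}\sum_{k=0}^{p}\binom{p}{k}(-1)^{p-k}\alpha\Big(ku+\sum_{j\in B}v_j\Big),
\end{displaymath}
with $B$ ranging over subsets of $\{1,\dots,n-p\}$. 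The crux is that the inner sum vanishes for every fixed $B$: writing $w=\sum_{j\in B}v_j$, the coefficients $\binom{p}{k}$ with $0<k<p$ are zero in $F$, while $pu=0$ in $V$ gives $\alpha(pu+w)=\alpha(w)$, so the inner sum collapses to $\bigl((-1)^p+1\bigr)\alpha(w)$; and $(-1)^p+1$ equals $0$ for odd $p$ and $2$ for $p=2$, hence is $0$ in $F$ in both cases. This proves the lemma.

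I expect no genuine obstacle: the whole argument is driven by the single fact that the $p$-th difference operator along any vector annihilates $\alpha$ in characteristic $p$, and the only thing to watch is the sign bookkeeping and the bundling of subsets by size. A more conceptual variant is to introduce the commuting operators $(D_uf)(x)=f(x+u)-f(x)$, verify from the recurrence (\ref{Eq:Recurrence}) that $\df{n}{\alpha}(u_1,\dots,u_n)=(D_{u_1}\cdots D_{u_n}\alpha)(0)$, and then observe that $D_u^p$ is the zero operator in characteristic $p$; I would note this as an alternative but carry out the direct computation above.
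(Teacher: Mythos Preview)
Your proof is correct and follows essentially the same route as the paper: both split the defining sum over subsets of $\{1,\dots,n\}$ into a sum over the $p$ repeated slots (grouped by size $k$) and a sum over the remaining $n-p$ slots, then use that $\binom{p}{k}\equiv 0$ for $0<k<p$ and $pu=0$ to collapse the inner sum to $\bigl((-1)^p+1\bigr)\alpha(w)$, which vanishes in characteristic $p$. Your remark about the difference-operator reformulation $D_u^p=0$ is a pleasant conceptual gloss the paper does not include, but the computational core is identical.
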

\begin{proof}
There is nothing to prove when $n<\chr{F}$. Assume that $n\ge p=\chr{F}$ and
let $u$, $v_1$, $\dots$, $v_{n-p}\in V$. By definition of $\df{n}{\alpha}$,
\begin{displaymath}
    \df{n}{\alpha}(p*u,v_1,\dots,v_{n-p})
    = \sum \sum_{k=0}^p (-1)^{n-r-k}
        \binom{p}{k}\alpha(ku+v_{i_1}+\cdots+v_{i_r}),
\end{displaymath}
where the outer summation runs over all subsets $\{i_1,\dots,i_r\}$ of
$\{1,\dots,n-p\}$. Since $p$ divides $\binom{p}{k}$ unless $k=0$ or $k=p$, the
inner sum reduces to
\begin{displaymath}
    (-1)^{n-r}\alpha(v_{i_1}+\cdots+v_{i_r})
    + (-1)^{n-r-p}\alpha(v_{i_1}+\cdots+v_{i_r}).
\end{displaymath}
When $p$ is odd, the two signs $(-1)^{n-r}$ and $(-1)^{n-r-p}$ are opposite to
each other, and the inner sum vanishes. When $p$ is even, the two signs are the
same and the inner sum becomes $2\alpha(v_{i_1}+\cdots+v_{i_r})=0$.
\end{proof}

In the rest of this section we show that: (a) every characteristic $n$-additive
form can be realized by polarization if $n!$ is invertible, and (b) every
characteristic $n$-linear form can be realized by polarization of a homogeneous
polynomial of degree $n$ with all exponents less than $\chr{F}$. For (a), we
generalize (\ref{Eq:QuadraticRecovery}) and set
\begin{displaymath}
    \alpha(u) = \frac{\varphi(n*u)}{n!}.
\end{displaymath}
For (b), we generalize (\ref{Eq:UniqueQuadraticRecovery}), once again having to
resort to coordinates.

Result (a) is mentioned without proof by Greenberg \cite[p. 110]{Greenberg} and
it has been rediscovered and proved by Ferrero and Micali in \cite{FM}. To our
knowledge, (b) is new.

\begin{lemma}\label{Lm:Coincide} Let $\varphi$, $\psi:V^n\to F$ be
characteristic $n$-additive forms such that
\begin{displaymath}
    \varphi(u_1,\dots,u_n)=\psi(u_1,\dots,u_n)
\end{displaymath}
whenever $u_1$, $\dots$, $u_n$ are pairwise distinct vectors of $V$. Then
$\varphi=\psi$.
\end{lemma}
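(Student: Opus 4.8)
The plan is to reduce the general identity to the case of pairwise distinct arguments by a multilinearity (multi-additivity) argument, exploiting the characteristic hypothesis precisely when some argument is forced to be repeated. Set $\delta = \varphi - \psi$. Then $\delta$ is again a characteristic $n$-additive form, and it vanishes on every tuple of pairwise distinct vectors; we must show $\delta = 0$ identically. I would argue by induction on the number of \emph{coincidences} among the arguments, i.e.\ on $n - |\{u_1,\dots,u_n\}|$ (counting distinct values). The base case is the hypothesis. For the inductive step, suppose two arguments agree, say $u_i = u_j = u$ for some $i \ne j$, and all the coincidence-counting is otherwise minimal.

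The key step is to replace one copy of the repeated vector $u$ by a generic vector and use additivity to expand. Concretely, pick the smallest field of definition and a basis, or more cleanly: since $V$ is a vector space over an infinite-dimensional-enough setting — actually one must be careful here, as $V$ may be small — instead I would use the following trick. By $n$-additivity, for any $w\in V$,
\begin{displaymath}
    \delta(u+w, u, v_3,\dots,v_n) = \delta(u,u,v_3,\dots,v_n) + \delta(w,u,v_3,\dots,v_n),
\end{displaymath}
and similarly one can peel off using the other slot. The idea is to choose $w$ so that $u+w$, together with $u$ and the remaining entries, has fewer coincidences, so that by induction the left side and the ``new'' term on the right are controlled, leaving $\delta$ on a repeated pair expressed via strictly-better tuples — \emph{unless} $u$ itself appears among the $v_k$, or $V$ is too small to find a separating $w$. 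When $p = \chr{F}$ is finite and $n \ge p$, a forced repetition of multiplicity $p$ is exactly where the characteristic condition $\delta(p*u, \dots) = 0$ enters and kills the otherwise-uncontrollable term; this is the heart of why the hypothesis is stated with ``characteristic.''

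The main obstacle I anticipate is bookkeeping the induction cleanly: one needs an induction parameter that decreases under the expansion step in all cases, and one must verify that the ``bad'' terms — tuples where a vector repeats with full multiplicity $p$, or where $V = \mathbb{F}_2$ so that $u + w \in \{0, u, w\}$ always — are precisely those annihilated by $\delta(0,\dots) = 0$ (using $\delta$ additive, hence $\delta$ vanishes when any argument is $0$) or by the characteristic hypothesis. I would therefore set up the induction on the pair (number of distinct values among the arguments, maximum multiplicity of a repeated value), ordered lexicographically in the appropriate direction, and handle the multiplicity-$p$ collapse as a separate base-type case. Once the expansion step is justified in all configurations, the conclusion $\delta = 0$, hence $\varphi = \psi$, follows immediately.
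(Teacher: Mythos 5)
Your overall strategy (reduce repeated arguments to pairwise distinct ones by additivity, with the characteristic condition as a backstop) points in the right direction, but the specific reduction you propose --- splitting one copy of a repeated vector $u$ as $(u+w)$ minus $w$ for a ``generic'' $w$ --- has a gap that your own hedging does not close. The set of $w$ you must avoid has size roughly twice the number of distinct values already in the tuple, so when $V$ is small relative to $n$ no admissible $w$ exists; and your claimed fallback, that such failures occur exactly when some argument is repeated with multiplicity $p$ (so that the characteristic condition kills the term directly), is false. Concretely, take $V=F=\gf{5}$, $n=8$, and the tuple $(1,1,2,2,3,3,4,4)$: every nonzero field element already occurs among the remaining arguments, so no splitting vector $w$ strictly decreases the number of coincidences in both resulting terms, yet no argument has multiplicity $5$, so the characteristic condition does not apply to this tuple as written. (The value of $\delta$ there is indeed $0$, but one sees this only after first \emph{merging} $2,3,4$ into integer multiples of $1$ by additivity, reducing to $\delta(8*1)$, which the characteristic condition does kill.)

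That merging step is exactly the idea you are missing, and it is how the paper proceeds. Take a counterexample $\delta(s_1*u_1,\dots,s_m*u_m)\ne 0$ with the $u_i$ pairwise distinct and the number $m$ of distinct arguments as small as possible. The characteristic hypothesis forces every $s_i<p$ (a block of multiplicity $\ge p$ would make the value $0$), and minimality of $m$ lets one assume that no $u_i$ is a nonzero integer multiple of another $u_j$: otherwise pull the integer scalar out of each slot by additivity and merge the two blocks, contradicting minimality. Then the $n$ vectors $u_1,2u_1,\dots,s_1u_1,u_2,2u_2,\dots,s_mu_m$ are pairwise distinct and nonzero, and additivity gives $\delta$ evaluated on them equal to $\bigl(\prod_i s_i!\bigr)\,\delta(s_1*u_1,\dots,s_m*u_m)$ with $\prod_i s_i!\ne 0$ in $F$ because each $s_i<p$ --- contradicting the hypothesis. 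In effect the paper replaces your generic translate $u+w$ by the integer multiples $u,2u,\dots,su$ of the repeated vector itself, which always exist and are automatically distinct, so no case analysis on the size of $V$ is needed. To salvage your version you would have to add the merging reduction as a preliminary step; without it the induction does not terminate over small fields.
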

\begin{proof}
Assume that $\varphi(s_1*u_1,\dots,s_m*u_m)\ne \psi(s_1*u_1,\dots,s_m*u_m)$ for
some pairwise distinct vectors $u_1$, $\dots$, $u_m$ and positive integers
$s_1$, $\dots$, $s_m$, where $s_1+\cdots+s_m=n$ and where $m$ is as small as
possible. Note that $u_i\ne 0$ for every $i$ by additivity, and $s_i<\chr{F}$
since both $\varphi$, $\psi$ are characteristic.

Suppose for a while that $u_2=k u_1$ for an integer $0<k<\chr{F}$. Then
\begin{multline*}
    k^{s_2} \varphi(s_1*u_1, s_2*u_1, s_3*u_3, \dots, s_m*u_m)
    = \varphi(s_1*u_1,s_2*u_2,\dots,s_m*u_m)\\ \ne
    \psi(s_1*u_1,s_2*u_2,\dots,s_m*u_m)
    = k^{s_2} \psi(s_1*u_1, s_2*u_1, s_3*u_3, \dots, s_m*u_m)
\end{multline*}
and thus
\begin{displaymath}
    \varphi((s_1+s_2)*u_1, s_3*u_3, \dots, s_m*u_m)\ne
    \psi((s_1+s_2)*u_1, s_3*u_3, \dots, s_m*u_m),
\end{displaymath}
a contradiction with minimality of $m$.

We can therefore assume that for every $i\ne j$ and every $0<k<\chr{F}$ we have
$u_i\ne ku_j$. Then $v_1=u_1$, $v_2=2u_1$, $\dots$, $v_{s_1}=s_1u_1$,
$v_{s_1+1}=u_2$, $\dots$, $v_{s_1+s_2}=s_2u_2$, $\dots$, $v_n=s_mu_m$ are $n$
distinct vectors and
\begin{displaymath}
    \varphi(v_1,\dots,v_n) = \varphi(s_1*u_1,\dots,s_m*u_m) \prod_{i=1}^m s_i!
\end{displaymath}
is not equal to
\begin{displaymath}
    \psi(s_1*u_1,\dots,s_m*u_m) \prod_{i=1}^m s_i! = \psi(v_1,\dots,v_n),
\end{displaymath}
a contradiction.
\end{proof}

\begin{proposition}\label{Pr:RecoverFromf}
Let $n<\chr{F}$, and let $\varphi:V^n\to F$ be a characteristic $n$-additive
form. Then $\alpha:V\to F$ defined by
\begin{displaymath}
    \alpha(u)=\frac{\varphi(n*u)}{n!}
\end{displaymath}
satisfies $\df{n}{\alpha}=\varphi$.
\end{proposition}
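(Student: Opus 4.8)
The plan is to compute $\df{n}{\alpha}$ directly from the defining formula (\ref{Eq:CombPol}), substitute the definition of $\alpha$, expand everything by multiadditivity, and then watch the sum collapse under inclusion--exclusion to $\varphi(u_1,\dots,u_n)$.

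First I would record two preliminaries. Since $n<\chr{F}$, the integer $n!$ is invertible in $F$, so $\alpha$ is well defined; and since an additive form vanishes as soon as one of its arguments is $0$, we get $\alpha(0)=\varphi(n*0)/n!=0$, so $\df{n}{\alpha}$ makes sense. Next, because $\varphi$ is symmetric and $n$-additive it is additive in each coordinate, so expanding the $n$ slots one at a time gives, for any $w_1,\dots,w_k\in V$,
\[
    \varphi\bigl(n*(w_1+\cdots+w_k)\bigr)=\sum_{g\colon\{1,\dots,n\}\to\{1,\dots,k\}}\varphi(w_{g(1)},\dots,w_{g(n)}).
\]

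Then I would apply this with the $w$'s taken to be the vectors $u_i$ for $i$ in a chosen subset $S\subseteq\{1,\dots,n\}$, obtaining $\alpha\bigl(\sum_{i\in S}u_i\bigr)=\frac1{n!}\sum_{f\colon\{1,\dots,n\}\to S}\varphi(u_{f(1)},\dots,u_{f(n)})$. Substituting into (\ref{Eq:CombPol}) (the empty subset contributes $0$), noting that the maps $f\colon\{1,\dots,n\}\to S$ are exactly the maps $f\colon\{1,\dots,n\}\to\{1,\dots,n\}$ with $\mathrm{im}(f)\subseteq S$, and interchanging the order of summation yields
\[
    \df{n}{\alpha}(u_1,\dots,u_n)=\frac1{n!}\sum_{f\colon\{1,\dots,n\}\to\{1,\dots,n\}}\varphi(u_{f(1)},\dots,u_{f(n)})\sum_{\mathrm{im}(f)\subseteq S\subseteq\{1,\dots,n\}}(-1)^{n-|S|}.
\]
Writing $S=\mathrm{im}(f)\cup S'$ with $S'$ ranging over subsets of the complement of $\mathrm{im}(f)$, the inner sum equals $(1-1)^{\,n-|\mathrm{im}(f)|}$, which vanishes unless $\mathrm{im}(f)=\{1,\dots,n\}$ — i.e. unless $f$ is a permutation — and equals $1$ in that case. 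For each of the $n!$ permutations $f$, symmetry of $\varphi$ gives $\varphi(u_{f(1)},\dots,u_{f(n)})=\varphi(u_1,\dots,u_n)$, so the whole expression collapses to $\frac1{n!}\cdot n!\cdot\varphi(u_1,\dots,u_n)=\varphi(u_1,\dots,u_n)$.

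The only step needing care is this bookkeeping: making the interchange of summations precise and evaluating the inner alternating sum over subsets containing $\mathrm{im}(f)$, where the substitution $S=\mathrm{im}(f)\cup S'$ settles it at once. Note the hypothesis that $\varphi$ be \emph{characteristic} is not actually used here, since $n<\chr{F}$ makes it automatic; it is stated only for uniformity with the other results of this section. (Alternatively one could verify that $\df{n}{\alpha}$ is $n$-additive and invoke Lemma \ref{Lm:Coincide} after checking the identity on tuples of pairwise distinct vectors, but the direct computation above establishes it for all tuples simultaneously and is shorter.)
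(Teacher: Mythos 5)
Your computation is correct, and it takes a route that is organized differently from the paper's. The paper first invokes Lemma \ref{Lm:Coincide} to reduce the identity $\df{n}{\alpha}=\varphi$ to the case of pairwise distinct arguments $u_1,\dots,u_n$, and then counts, for each pattern $\varphi(s_1*v_1,\dots,s_m*v_m)$, the multiplicity with which it occurs in the expansion of (\ref{Eq:CombPol}); that multiplicity is a multinomial coefficient times the alternating sum $\sum_{\ell}(-1)^{n-(\ell+m)}\binom{n-m}{\ell}$, which vanishes unless $m=n$. You instead index the expansion by functions $f\colon\{1,\dots,n\}\to\{1,\dots,n\}$ rather than by multiset patterns, interchange the two summations, and evaluate the inner alternating sum over $S\supseteq\mathrm{im}(f)$ as $(1-1)^{n-|\mathrm{im}(f)|}$; only permutations survive. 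The combinatorial core is the same vanishing alternating sum, but your bookkeeping works uniformly for arbitrary (not necessarily distinct) tuples, so you never need Lemma \ref{Lm:Coincide} --- the paper needs the distinctness reduction precisely because its grouping into patterns $\varphi(s_1*v_1,\dots,s_m*v_m)$ is only unambiguous when the $v_i$ are distinct. What the paper's version buys in exchange is that Lemma \ref{Lm:Coincide} and the multiplicity count are reused almost verbatim in the proof of Theorem \ref{Th:Realization}, where the formal division by $n!$ makes a direct function-counting argument less transparent. Your side remarks are also accurate: $\alpha(0)=0$ follows from additivity so $\df{n}{\alpha}$ is defined, and the hypothesis that $\varphi$ be characteristic is indeed vacuous when $n<\chr{F}$, by the definition of a characteristic form.
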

\begin{proof}
Both $\df{n}{\alpha}$ and $\varphi$ are characteristic since $n<\chr{F}$. By
Lemma \ref{Lm:Coincide}, it suffices to show that
$\df{n}{\alpha}(u_1,\dots,u_n) = \varphi(u_1,\dots,u_n)$ for every pairwise
distinct vectors $u_1$, $\dots$, $u_n$ of $V$. We have
\begin{eqnarray*}
    \df{n}{\alpha}(u_1,\dots,u_n) &=&
    \sum_{1\le i_1<\dots<i_k\le n}
        (-1)^{n-k}\alpha(u_{i_1}+\cdots+u_{i_k})\\
    &=&\frac{1}{n!}\sum_{1\le i_1<\dots<i_k\le n}
        (-1)^{n-k} \varphi(n*(u_{i_1}+\cdots +u_{i_k})).
\end{eqnarray*}
Let $v_1$, $\dots$, $v_m$ be pairwise distinct vectors of $V$ such that $v_1$,
$\dots$, $v_m\in\{u_1,\dots,u_n\}$, and let $1\le s_i\le n$ be such that
$s_1+\cdots+s_m=n$. We count how many times $\varphi(s_1*v_1,\dots,s_m*v_m)$
appears in $\df{n}{\alpha}(u_1,\dots,u_n)$. It appears precisely in those
summands $\varphi(n*(u_{i_1}+\cdots+u_{i_k}))$ satisfying
$\{v_1,\dots,v_m\}\subseteq \{u_{i_1},\dots,u_{i_k}\}$, and then it appears
\begin{displaymath}
    \binom{n}{s_1,\dots,s_m} = \frac{n!}{s_1!\cdots s_m!}
\end{displaymath}
times; a number that is independent of $k$. For a fixed $\ell$, there are
precisely $\binom{n-m}{\ell}$ subsets $\{u_{i_1},\dots,u_{i_{\ell+m}}\}$
containing $\{v_1,\dots,v_m\}$. Altogether, $\varphi(s_1*v_1,\dots,s_m*v_m)$
appears with multiplicity
\begin{equation}\label{Eq:Count}
    \binom{n}{s_1,\dots,s_m}\sum_{\ell=0}^{n-m}
    (-1)^{n-(\ell+m)}\binom{n-m}{\ell}.
\end{equation}
Recall that
\begin{displaymath}
    \sum_{\ell=0}^n (-1)^\ell\binom{n}{\ell} = \left\{\begin{array}{ll}
        1,&n=0,\\
        0,&n>0.
    \end{array}\right.
\end{displaymath}
Hence (\ref{Eq:Count}) vanishes when $m<n$. When $m=n$, we have
$s_1=\cdots=s_n=1$, and so (\ref{Eq:Count}) is equal to $n!$.
\end{proof}

\begin{theorem}[Realizing characteristic $n$-linear forms by polarization]\label{Th:Realization}
Let $\{e_1,\dots,e_d\}$ be a basis of $V$ and let $\varphi:V^n\to F$ be a
characteristic $n$-linear form. Define $\alpha:V\to F$ by
\begin{equation}\label{Eq:Realization}
    \alpha(\sum a_ie_i) =
    \sum_{\substack{t_1+\cdots+t_d=n\\0\le t_i <\chr{F}}}
        \frac{a_1^{t_1}\cdots a_d^{t_d}}{t_1!\cdots t_d!}
        \varphi(t_1*e_1,\dots,t_d*e_d).
\end{equation}
Then $\df{n}{\alpha} = \varphi$. Moreover, $\alpha$ is a homogeneous polynomial
of degree $n$ with all exponents less than $\chr{F}$.
\end{theorem}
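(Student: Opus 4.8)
The plan is to dispatch the ``moreover'' clause first and then prove $\df n\alpha=\varphi$ by a direct polarization computation in coordinates. One cannot here reduce to Proposition \ref{Pr:RecoverFromf}, since $n!$ need not be invertible. Fix $u=\sum_i a_ie_i$. Every monomial occurring in (\ref{Eq:Realization}) is $a_1^{t_1}\cdots a_d^{t_d}$ with $t_1+\cdots+t_d=n$, so $\alpha$ is a homogeneous polynomial of degree $n$ in the coordinates; the constraint $0\le t_i<\chr F$ makes each $t_i!$ invertible (so the displayed coefficients are well defined) and keeps all exponents below $\chr F$. Since $n\ge1$ we get $\alpha(0)=0$, so $\df n\alpha$ is defined. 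That $\alpha$ has degree exactly $n$ when $\varphi\ne0$ will follow from the computation below.

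To prove $\df n\alpha=\varphi$ I would evaluate both sides on an arbitrary tuple $(u_1,\dots,u_n)$, with $u_j=\sum_i a_{j,i}e_i$. Call a map $f\colon\{1,\dots,n\}\to\{1,\dots,d\}$ \emph{of type} $(t_1,\dots,t_d)$ if $|f^{-1}(i)|=t_i$ for every $i$. Because the $n$th defect is linear in its argument (noted after (\ref{Eq:Recurrence})), it suffices to polarize one monomial at a time: fix $(t_1,\dots,t_d)$ with $\sum_i t_i=n$, let $M\colon V\to F$ be the form $\sum_i a_ie_i\mapsto a_1^{t_1}\cdots a_d^{t_d}$, and fix a ``coloring'' $c\colon\{1,\dots,n\}\to\{1,\dots,d\}$ of type $(t_1,\dots,t_d)$. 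Substituting $M$ into (\ref{Eq:CombPol}) and expanding each $\bigl(\sum_{j\in S}a_{j,i}\bigr)^{t_i}$ by the multinomial theorem writes $\df n M(u_1,\dots,u_n)$ as $\sum_S\sum_\phi (-1)^{n-|S|}\prod_{p=1}^n a_{\phi(p),c(p)}$, the outer sum over $S\subseteq\{1,\dots,n\}$ and the inner over all functions $\phi\colon\{1,\dots,n\}\to S$. Reorganizing by the image of $\phi$ and using the elementary fact that $\sum_{I\subseteq S\subseteq\{1,\dots,n\}}(-1)^{n-|S|}$ is $0$ unless $I=\{1,\dots,n\}$ (in which case it is $1$), only the bijective functions $\{1,\dots,n\}\to\{1,\dots,n\}$ survive, so
\[
   \df n M(u_1,\dots,u_n)=\sum_{\sigma}\ \prod_{p=1}^n a_{\sigma(p),c(p)}=\Bigl(\prod_{i=1}^d t_i!\Bigr)\sum_{f}\ \prod_{j=1}^n a_{j,f(j)},
\]
the first sum over all permutations $\sigma$ of $\{1,\dots,n\}$ and the second over all $f$ of type $(t_1,\dots,t_d)$; the last equality reindexes $j=\sigma(p)$, puts $f=c\circ\sigma^{-1}$, and uses that each such $f$ is hit by exactly $t_1!\cdots t_d!$ permutations.

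Feeding this into (\ref{Eq:Realization}), the coefficient $1/(t_1!\cdots t_d!)$ cancels the factor $t_1!\cdots t_d!$, giving
\[
   \df n\alpha(u_1,\dots,u_n)=\sum_{\substack{t_1+\cdots+t_d=n\\0\le t_i<\chr F}}\varphi(t_1*e_1,\dots,t_d*e_d)\sum_{f}\ \prod_{j=1}^n a_{j,f(j)},
\]
the inner sum over $f$ of type $(t_1,\dots,t_d)$. On the other hand, expanding $\varphi(u_1,\dots,u_n)$ by $n$-linearity, grouping the resulting functions $f$ by type, and using symmetry of $\varphi$ to replace $\varphi(e_{f(1)},\dots,e_{f(n)})$ by $\varphi(t_1*e_1,\dots,t_d*e_d)$ for $f$ of type $(t_1,\dots,t_d)$, yields the very same double sum, except that it ranges over all $(t_1,\dots,t_d)$ with $\sum_i t_i=n$. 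The extra terms are exactly those with some $t_i\ge\chr F$, and for these $\varphi(t_1*e_1,\dots,t_d*e_d)=0$ since $\varphi$ is characteristic and the argument $e_i$ is repeated at least $\chr F$ times. Hence the two expansions coincide, so $\df n\alpha=\varphi$; and if $\varphi\ne0$ then, again by this expansion, some surviving coefficient $\varphi(t_1*e_1,\dots,t_d*e_d)$ with all $t_i<\chr F$ is nonzero, so $\alpha$ genuinely has degree $n$.

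The only real obstacle I anticipate is the bookkeeping in the monomial step: arranging the double sum over $(S,\phi)$ so that the alternating sum over $S$ annihilates every non-surjective $\phi$, and then counting the $t_1!\cdots t_d!$ permutations producing each $f$. Once the identity of the first display above --- ``the full polarization of a degree-$n$ monomial is $t_1!\cdots t_d!$ times the sum of its associated multilinear monomials'' --- is in place, matching it against the $n$-linear expansion of $\varphi$ is routine, and the characteristic hypothesis is used only to discard the degenerate types.
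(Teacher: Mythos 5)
Your proof is correct, but it takes a genuinely different route from the paper's. The paper expands $\varphi(n*u)$ by multilinearity into multinomial coefficients $\binom{n}{t_1,\dots,t_d}$, observes that for $t_i<\chr{F}$ the $p$-adic valuation of each such coefficient equals that of $n!$, and then ``formally divides'' by $n!$ so that the counting argument of Proposition \ref{Pr:RecoverFromf} (an inclusion--exclusion over subsets containing a fixed collection of distinct vectors, combined with Lemma \ref{Lm:Coincide}) carries over even when $n\ge\chr{F}$. You instead polarize each monomial of (\ref{Eq:Realization}) directly in coordinates and establish the identity that the $n$th defect of a degree-$n$ monomial of type $(t_1,\dots,t_d)$ equals $t_1!\cdots t_d!$ times the sum of the associated multilinear monomials; since only the factors $t_i!$ with $t_i<\chr{F}$ ever need to be inverted (they cancel against the denominators already present in (\ref{Eq:Realization})), the case distinction on $n$ versus $\chr{F}$ disappears, and the characteristic hypothesis enters exactly once, to kill the degenerate types in the multilinear expansion of $\varphi(u_1,\dots,u_n)$. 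Your route buys a self-contained argument that makes the divisibility bookkeeping explicit and avoids the somewhat delicate formal division by $n!$, which implicitly asks the reader to lift the computation to characteristic zero; the paper's route buys brevity by recycling Proposition \ref{Pr:RecoverFromf} wholesale. Your inclusion--exclusion step (only surjective, hence bijective, $\phi$ survive the alternating sum over $S$) and the count of $t_1!\cdots t_d!$ permutations lying over each type-$(t_1,\dots,t_d)$ function are both sound, as is the closing observation that the characteristic condition discards precisely the types with some $t_i\ge\chr{F}$.
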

\begin{proof}
Let $p=\chr{F}\le\infty$. By $n$-linearity and symmetry of $\varphi$, we have
\begin{displaymath}
    \varphi\left(n*\sum_{i=1}^d a_ie_i\right) =
    \sum_{\substack{t_1+\cdots+t_d=n\\0\le t_i\le n}}
        \binom{n}{t_1,\dots,t_d}a_1^{t_1}\cdots a_d^{t_d}
        \varphi(t_1*e_1,\dots,t_d*e_d).
\end{displaymath}
Since $\varphi$ is characteristic, we can rewrite this as
\begin{equation}\label{Eq:AuxA}
    \varphi\left(n{*}\sum_{i=1}^d a_ie_i\right){=}
    \sum_{\substack{t_1+\cdots+t_d=n\\0\le t_i<p}}
        \binom{n}{t_1,\dots,t_d}a_1^{t_1}\cdots a_d^{t_d}
        \varphi(t_1{*}e_1,\dots,t_d{*}e_d).
\end{equation}
If $n<p$, we can divide (\ref{Eq:AuxA}) by $n!$ and apply Proposition
\ref{Pr:RecoverFromf}. For the rest of the proof assume that $n\ge p$.

Then all summands of the right hand side of (\ref{Eq:AuxA}) vanish, since the
multinomial coefficients $\binom{n}{t_1,\dots,t_d}$ are equal to zero (as
$t_i<p$). In fact, the multiplicity of $p$ in the prime factorization of
$\binom{n}{t_1,\dots,t_d}$, say $p^m$, is the same as the multiplicity of $p$
in the prime factorization of $n!$. Thus, upon formally dividing
(\ref{Eq:AuxA}) by $n!$, the left hand side of (\ref{Eq:AuxA}) becomes
$\varphi(n*u)/n!$ and the right hand side of (\ref{Eq:AuxA}) becomes
$\alpha(u)$. The calculation in the proof of Proposition \ref{Pr:RecoverFromf}
therefore still applies, proving $\df{n}{\alpha}=\varphi$.

Finally, $\alpha$ is obviously a homogeneous polynomial of degree $n$ with all
exponents less than $\chr{F}$.
\end{proof}

\begin{example}[$n=p=3$]
Let $\varphi:V^3\to\gf{3}$ be a characteristic trilinear form. Let
$\{e_1,e_2,e_3\}$ be a basis of $V$, and $u=a_1e_1+a_2e_2+a_3e_3$. Then
\begin{displaymath}
    \varphi(u,u,u) =
    \sum_{i\ne j} 3a_i^2a_j \varphi(e_i,e_i,e_j) +
    \sum_{i<j<k} 6a_ia_ja_k \varphi(e_i,e_j,e_k).
\end{displaymath}
Upon formally dividing this equality by $3!$, we obtain the homogeneous
polynomial from Theorem \ref{Th:Realization}, namely
\begin{displaymath}
    \alpha(u) = \sum_{i\ne j} \frac{a_i^2a_j}{2} \varphi(e_i,e_i,e_j)+
        \sum_{i<j<k} a_ia_ja_k \varphi(e_i,e_j,e_k).
\end{displaymath}
\end{example}

A careful reader might wonder if the property that every exponent is less than
$\chr{F}$ is invariant under a change of basis. In general the answer is
``no'', but for mappings of the form (\ref{Eq:Realization}) the answer is
``yes'', see Lemma \ref{Lm:WellDefined}.

\section{Combinatorial degree of polynomial mappings}

We now wish to return to the question: \emph{Which polynomial mappings are
$n$-applications?} Our task is therefore to characterize polynomial mappings
$\alpha$ that satisfy the homogeneity condition $\alpha(au) = a^n\alpha(u)$ and
for which $\df{n}{\alpha}$ is $n$-linear. When $F$ is a prime field,
$\df{n}{\alpha}$ is $n$-linear if and only if $\df{n}{\alpha}$ is $n$-additive,
which happens if and only if $\combdeg{\alpha}\le n$. We therefore need to know
how to calculate the combinatorial degree of polynomial mappings, which is what
we are going to explain in this section. In the next section, we tackle the
homogeneity condition and the linearity of $\df{n}{\alpha}$ with respect to
scalar multiplication.

Let $t$ be a nonnegative integer and $p$ a prime, where we also allow
$p=\infty$. Then there are uniquely determined integers $t_i$, the
\emph{$p$-adic digits of $t$}, satisfying $0\le t_i<p$ and $t=t_0p^0 + t_1p^1 +
t_2p^2 + \cdots$. In particular, when $p=\infty$, then $t_0=t$ and $t_i=0$ for
$i>0$, using the convention $\infty^0=1$. The \emph{$p$-weight $\weight_p(t)$
of $t$} is the sum $t_0+t_1+t_2+\cdots$.

Let $p=\chr{F}$. The \emph{$p$-degree of a monomial} $\mmon{x}{m}\in
F[\mvar{x}]$ is
\begin{displaymath}
    \deg_p(\mmon{x}{m}) = \sum_{i=1}^d \weight_p(m_i),
\end{displaymath}
and the \emph{$p$-degree of a polynomial} $f\in F[\mvar{x}]$ is
\begin{displaymath}
    \deg_p(f) = \max\{\deg_p(\mmon{x}{m});\; \mvar{m}\in M(f)\}.
\end{displaymath}
In particular, when $p=\infty$, $\deg_p(f) = \deg(f)$.

In \cite{Ward}, Ward showed:

\begin{proposition}\label{Pr:DegreesCoincide}
Let $F$ be a prime field or a field of characteristic $\infty$, $V$ a vector
space over $F$, and $\alpha:V\to F$ a polynomial mapping satisfying
$\alpha(0)=0$. Then $\combdeg{\alpha} = \deg(\alpha)$.
\end{proposition}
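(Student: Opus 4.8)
The plan is to move everything to coordinates and study combinatorial polarization directly on polynomials. Fix a basis $B=\{e_1,\dots,e_d\}$ and let $f\in F[\mvar x]$ be the reduced polynomial realizing $\alpha$ with respect to $B$; since $\alpha(0)=0$ it has no constant term, and by definition $\deg(f)=\deg(\alpha)$, which I call $N$, and we may assume $\alpha\ne0$, so $N\ge1$. For a polynomial $g$ without constant term I would introduce $n$ disjoint copies $x^{(1)}_j,\dots,x^{(n)}_j$ of each variable $x_j$ and let $\df{n}{g}$ be the polynomial in the $nd$ variables $x^{(i)}_j$ obtained from the right-hand side of (\ref{Eq:CombPol}) by substituting $g$ for $\alpha$ and the $i$th copy of the variables for $u_i$; since $g$ vanishes at the origin one may equivalently let the defining sum run over all subsets of $\{1,\dots,n\}$. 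A direct check from the definitions shows that $\df{n}{g}$ realizes $\df{n}{\gamma}$ with respect to the induced basis of $V^n$, where $\gamma$ is the mapping realized by $g$; in particular $\df{n}{f}$ realizes $\df{n}{\alpha}$. So, by linearity of $\df{n}{\cdot}$ and Lemma \ref{Lm:Reduced}, the statement reduces to two claims about monomials: (1) $\df{n}{\mmon x m}=0$ whenever $1\le|\mvar m|<n$, and (2) $\df{N}{f_N}$ is a nonzero polynomial function, where $f_N$ is the nonzero homogeneous component of $f$ of degree $N$.

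I would settle both claims by a single coefficient computation. Expanding each factor $\bigl(\sum_{i\in S}x^{(i)}_j\bigr)^{m_j}$ by the multinomial theorem, a monomial $\mu=\prod_{i=1}^n\bigl(x^{(i)}\bigr)^{\mvar b_i}$ occurs in the $S$-indexed summand of $\df{n}{\mmon x m}$ only when $T:=\{i:\mvar b_i\ne0\}\subseteq S$, and then with a coefficient not depending on $S$. Hence the coefficient of $\mu$ in $\df{n}{\mmon x m}$ equals a fixed constant times $\sum_{S\supseteq T}(-1)^{n-|S|}$, an alternating sum over subsets of $\{1,\dots,n\}$ containing $T$ that vanishes unless $T=\{1,\dots,n\}$. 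Since $|T|\le\sum_i|\mvar b_i|=|\mvar m|$, a surviving monomial forces $|\mvar m|\ge n$, which proves (1); and if moreover $|\mvar m|=n=N$, it forces every $\mvar b_i$ to have weight $1$, so that $\mu=x^{(1)}_{j_1}\cdots x^{(N)}_{j_N}$ with $\#\{i:j_i=j\}=m_j$ for all $j$, and the constant then works out to $m_1!\cdots m_d!$.

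For (2), I would first observe that by (1) with $n=N$ the components of $f$ of degree $<N$ polarize to zero, so $\df{N}{f}=\df{N}{f_N}$, and the computation above expresses $\df{N}{f_N}$ as a sum of pairwise distinct squarefree monomials in the $x^{(i)}_j$, in which the monomials produced by a given term $c(\mvar m)\mmon x m$ of $f_N$ all appear with coefficient $c(\mvar m)\,m_1!\cdots m_d!$. Being a sum of squarefree monomials, $\df{N}{f_N}$ is reduced, so by Lemma \ref{Lm:Reduced} it is a nonzero function as soon as it is a nonzero polynomial, i.e. as soon as $c(\mvar m)\,m_1!\cdots m_d!\ne0$ for some $\mvar m\in M(f_N)$. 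Here the hypothesis on $F$ enters: since $F$ is a prime field or has characteristic $\infty$ and $f$, hence $f_N$, is reduced, every exponent $m_j$ is $<\chr F$, so the product $m_1!\cdots m_d!$ is invertible. Thus $\df{N}{\alpha}\ne0$, while $\df{n}{\alpha}=0$ for every $n>N$ by (1), and therefore $\combdeg{\alpha}=N=\deg(\alpha)$.

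The one genuinely delicate point, which I expect to be the main obstacle, is precisely the non-vanishing used in (2): ruling out that $\df{N}{f_N}$ is a nonzero polynomial that nevertheless vanishes identically as a function. Passing to the reduced representative $f$ --- which forces the exponents of $f_N$ below $\chr F$ and hence makes the factorials $m_j!$ invertible --- together with Lemma \ref{Lm:Reduced}, is exactly what makes this work, and it is also where restricting to prime (or characteristic $\infty$) fields is essential: over a general field these factorials may vanish, and one is forced to replace $\deg$ by the coarser $\deg_p$, as in the next section.
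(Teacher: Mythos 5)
Your argument is correct, and it takes a genuinely different route from the paper. The paper states Proposition \ref{Pr:DegreesCoincide} as Ward's result and effectively re-derives it as the special case $\deg_p=\deg$ of the general Theorem \ref{Th:CombDeg}, whose proof goes through the explicit chain formula of Lemma \ref{Lm:Crucial} and Lucas' theorem (Lemma \ref{Lm:OneDimChains}): the combinatorial degree of a monomial is the length of a longest regular chain of multiexponents, and for a reduced monomial over a prime field every exponent is $<p$, so $\weight_p(m_i)=m_i$. You instead extract coefficients directly: the inclusion--exclusion sum $\sum_{S\supseteq T}(-1)^{n-|S|}$ kills every monomial of the expansion except those using all $n$ copy-indices, which forces $n\le\deg(\mmon{x}{m})$ and, at the top degree $N$, produces pairwise distinct squarefree monomials with coefficients $c(\mvar{m})\,m_1!\cdots m_d!$. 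This is essentially the polynomial-level analogue of the multinomial count in the paper's Proposition \ref{Pr:RecoverFromf}, and it correctly isolates where the hypothesis on $F$ enters (invertibility of $m_j!$ for reduced exponents, plus Lemma \ref{Lm:Reduced} to pass from nonzero polynomial to nonzero function). What the paper's heavier machinery buys is the general formula $\combdeg{f}=\deg_p(f)$ over arbitrary fields, where your factorials can vanish and one must instead track which binomial coefficients survive via Lucas; your approach buys a shorter, self-contained proof of exactly the stated special case. The only points worth making fully explicit in a written version are the identity $\df{n}{f}=$ (polynomial realizing $\df{n}{\alpha}$), which is the paper's (\ref{Eq:FormPol}), and the observation that distinct multiexponents $\mvar{m}$ cannot produce colliding monomials $\mu$ (recoverable by counting copies of each $x_j$ in $\mu$, as in Lemma \ref{Lm:Disjoint}); you gesture at both and neither is a gap.
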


He also mentioned \cite[p.\ 195]{Ward} that ``It is not difficult to show that,
in general, the combinatorial degree of a [reduced] nonzero polynomial over
$\gf{q}$, $q$ a power of the prime $p$, is the largest value of the sum of the
$p$-weights of the exponents for the monomials appearing in the polynomial.'' A
proof of this assertion can be found already in \cite{VojtCombTexas}. Here we
prove a more general result for polynomials over any field, not just for
polynomials over finite fields $\gf{q}$. We follow the technique of
\cite{VojtCombTexas} very closely.

When $\mvar{x_1} = (x_{1,1},\dots,x_{1,d})$,
$\mvar{x_2}=(x_{2,1},\dots,x_{2,d})$ are two multivariables, we write
$\mvar{x_1}+\mvar{x_2}$ for the multivariable $(x_{1,1}+x_{2,1}$, $\dots$,
$x_{1,d}+x_{2,d})$. Moreover, when $\mvar{m}=(m_1,\dots,m_d)$ is a
multiexponent, we write $(\mvar{x_1}+\mvar{x_2})^{\mvar{m}}$ for
$(x_{1,1}+x_{2,1})^{m_1}\cdots(x_{1,d}+x_{2,d})^{m_d}$. For $f\in F[\mvar{x}]$
satisfying $f(0)=0$ and for $n\ge 1$ let $\df{n}{f}\in
F[x_{1,1},\dots,x_{1,d},\dots,x_{n,1},\dots,x_{n,d}]$ be defined by
\begin{equation}\label{Eq:PolForPol}
    \df{n}{f}(\mvar{x_1},\dots,\mvar{x_n}) =
    \sum_{1\le i_1<\cdots<i_m\le n}
    (-1)^{n-m} f(\mvar{x_{i_1}}+\cdots+\mvar{x_{i_m}}).
\end{equation}
The \emph{(formal) combinatorial degree $\combdeg{f}$ of $f\in F[\mvar{x}]$} is
the least integer $n$ such that $\df{n}{f}$ is a nonzero polynomial and
$\df{n+1}{f}$ is the zero polynomial, letting again $\combdeg{0}=-1$.

Whenever we speak of combinatorial polarization or combinatorial degree of a
polynomial $f$, we tacitly assume that $f(0)=0$.

We shall show in Theorem \ref{Th:CombDeg} that $\combdeg{f}=\deg_p(f)$ for
every $f\in F[\mvar{x}]$ and in Corollary \ref{Cr:CombDeg} that
$\combdeg{\alpha}=\combdeg{f}$ whenever $\alpha:V\to F$ is a polynomial mapping
realized by $f$ with respect to some basis of $V$.

\begin{lemma}\label{Lm:Disjoint}
If $f$, $g\in F[\mvar{x}]$ satisfy $M(f)\cap M(g)=\emptyset$ then
$M(\df{n}{f})\cap M(\df{n}{g})=\emptyset$ for every $n\ge 1$.
\end{lemma}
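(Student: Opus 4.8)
The plan is to exploit the fact that combinatorial polarization is support‑preserving in a suitable graded sense. Equip the polynomial ring $F[x_{1,1},\dots,x_{n,d}]$ with a $\mathbb{Z}_{\ge 0}^d$-grading by assigning to a monomial $\prod_{i,j} x_{i,j}^{e_{i,j}}$ the \emph{collapsed multidegree} $\bigl(\sum_{i=1}^n e_{i,1},\dots,\sum_{i=1}^n e_{i,d}\bigr)$; write $h=\sum_{\mvar{m}} h_{\mvar{m}}$ for the corresponding decomposition of $h\in F[x_{1,1},\dots,x_{n,d}]$ into homogeneous components. The point of this grading is that monomials of different collapsed multidegree are distinct, so the supports of the various $h_{\mvar{m}}$ are pairwise disjoint.

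First I would note that for any nonempty $S\subseteq\{1,\dots,n\}$ the substitution homomorphism $\Phi_S:F[\mvar{x}]\to F[x_{1,1},\dots,x_{n,d}]$ sending $x_j$ to $\sum_{i\in S} x_{i,j}$ takes the monomial $\mmon{x}{m}$ to a polynomial homogeneous of collapsed multidegree $\mvar{m}$: expanding the factor $(\sum_{i\in S}x_{i,j})^{m_j}$ produces only monomials whose $j$-th collapsed degree equals $m_j$ (and whose other collapsed degrees are $0$), and multiplying over $j$ gives the claim. Consequently, writing $f=\sum_{\mvar{m}\in M(f)} c(\mvar{m})\mmon{x}{m}$, the collapsed-multidegree-$\mvar{m}$ component of $\Phi_S(f)$ is $c(\mvar{m})\,\Phi_S(\mmon{x}{m})$ when $\mvar{m}\in M(f)$ and is $0$ otherwise.

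Next, since $\df{n}{f}(\mvar{x_1},\dots,\mvar{x_n})=\sum_{S} (-1)^{n-|S|}\,\Phi_S(f)$ by (\ref{Eq:PolForPol}), with $S$ ranging over the nonempty subsets of $\{1,\dots,n\}$, and since passing to homogeneous components commutes with this finite sum, the collapsed-multidegree-$\mvar{m}$ component of $\df{n}{f}$ equals $c(\mvar{m})\,\df{n}{\mmon{x}{m}}$ for $\mvar{m}\in M(f)$ and is $0$ otherwise. In particular every monomial occurring in $\df{n}{f}$ has collapsed multidegree lying in $M(f)$, i.e. $M(\df{n}{f})\subseteq\bigcup_{\mvar{m}\in M(f)} M(\df{n}{\mmon{x}{m}})$; any cancellation among the $\Phi_S(f)$ can only shrink this support, which does no harm. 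The same reasoning applies verbatim to $g$. Because $M(f)\cap M(g)=\emptyset$, and the sets $M(\df{n}{\mmon{x}{m}})$ consist of monomials of collapsed multidegree $\mvar{m}$, the unions for $f$ and for $g$ are disjoint, hence so are $M(\df{n}{f})$ and $M(\df{n}{g})$.

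I do not expect a genuine obstacle here beyond setting up the grading cleanly. The one place to be careful is that the argument yields only an inclusion of supports rather than an equality: the identity on homogeneous components is exact, but it is precisely the disjointness (over distinct $\mvar{m}$) of the component supports, built into the grading, that turns the separate inclusions for $f$ and $g$ into the desired disjointness.
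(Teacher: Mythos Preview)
Your proof is correct and is essentially the same argument as the paper's: the paper observes that from any monomial of $\df{n}{\mmon{x}{m}}$ one can reconstruct $\mvar{m}$ by summing, for each $j$, the exponents of $x_{1,j},\dots,x_{n,j}$---which is precisely your collapsed multidegree. You have packaged this reconstruction as a $\mathbb{Z}_{\ge 0}^d$-grading, which is a tidy formalization, but the underlying idea and the flow of the argument are identical.
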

\begin{proof}
It suffices to establish the lemma when $f$, $g$ are monomials, since
combinatorial polarization is a linear process. Let $f(\mvar{x}) =
\mmon{x}{m}$. Consider one of the summands $f(\mvar{x_1}+\cdots + \mvar{x_s})$
of $\df{n}{f}(\mvar{x_1},\dots,\mvar{x_n})$, as displayed in
(\ref{Eq:PolForPol}). We have
\begin{displaymath}
    f(\mvar{x_1}+\cdots +\mvar{x_s}) =
    (\mvar{x_1}+\cdots+\mvar{x_s})^{\mvar{m}} =
    (x_{1,1}+\cdots+ x_{s,1})^{m_1}\cdots (x_{1,d}+\cdots+x_{s,d})^{m_d}.
\end{displaymath}
In turn, let $h$ be a summand of $f(\mvar{x_1}+\cdots + \mvar{x_s})$. By the
multinomial theorem, for every $1\le i\le d$, the variables $x_{1,i}$, $\dots$,
$x_{s,i}$ appear in $h$ precisely $m_i$ times, counting multiplicities. Hence
the multiexponent $\mvar{m}$ can be reconstructed from any monomial of
$\df{n}{f}(\mvar{x_1},\dots,\mvar{x_n})$.
\end{proof}

\begin{corollary}\label{Cr:Disjoint}
Assume that $f\in F[\mvar{x}]$ satisfies $f(0)=0$. Then $\combdeg{f} =
\max\{\combdeg{\mmon{x}{m}};\;\mvar{m}\in M(f)\}$.
\end{corollary}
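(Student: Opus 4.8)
The plan is to combine Lemma~\ref{Lm:Disjoint} with the additivity (linearity over $F$) of combinatorial polarization. Write $f = \sum_{\mvar{m}\in M(f)} c(\mvar{m})\mmon{x}{m}$, so that by linearity $\df{n}{f} = \sum_{\mvar{m}\in M(f)} c(\mvar{m})\,\df{n}{\mmon{x}{m}}$. The content of Lemma~\ref{Lm:Disjoint} is that distinct monomials $\mmon{x}{m}$, $\mmon{x}{m'}$ of $f$ have $M(\df{n}{\mmon{x}{m}})\cap M(\df{n}{\mmon{x}{m'}})=\emptyset$: the original multiexponent is recoverable from any monomial of its polarization. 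Hence there can be no cancellation among the polarized pieces, and $M(\df{n}{f}) = \bigcup_{\mvar{m}\in M(f)} M(\df{n}{\mmon{x}{m}})$ (a disjoint union), with the coefficient of each monomial being the corresponding $c(\mvar{m})$, which is nonzero.

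From this the corollary is immediate. First I would observe that $\df{n}{f}$ is the zero polynomial if and only if every $M(\df{n}{\mmon{x}{m}})$ with $\mvar{m}\in M(f)$ is empty, i.e.\ if and only if $\df{n}{\mmon{x}{m}} = 0$ for every monomial of $f$. Therefore $\df{n}{f}\ne 0$ if and only if $\df{n}{\mmon{x}{m}}\ne 0$ for at least one $\mvar{m}\in M(f)$. Applying this at $n$ and at $n+1$: $\combdeg{f}$ is the largest $n$ for which $\df{n}{f}\ne 0$ (using the standard fact, already noted in the text via the recurrence, that $\df{m}{f}=0$ for all $m$ exceeding the combinatorial degree), which is precisely the largest $n$ for which some monomial $\mmon{x}{m}$ of $f$ has $\df{n}{\mmon{x}{m}}\ne 0$, i.e.\ $\max\{\combdeg{\mmon{x}{m}};\;\mvar{m}\in M(f)\}$. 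The degenerate case $f=0$, where $M(f)=\emptyset$ and both sides equal $-1$ by convention, should be mentioned separately (or the statement read as applying to nonzero $f$, for which $f(0)=0$ forces $M(f)$ to contain no constant term but to be nonempty).

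One small technical point to check is that $f(0)=0$ really does pass to a usable hypothesis on the monomials: it simply means $\mvar{0}\notin M(f)$, so every $\mmon{x}{m}$ appearing in $f$ satisfies $\mmon{x}{m}(0)=0$ and hence has a well-defined combinatorial degree in the sense of the text. I expect the main (very mild) obstacle to be purely bookkeeping: being careful that ``no cancellation'' really follows from the disjointness of the $M(\df{n}{\mmon{x}{m}})$ — that is, that a monomial surviving in $\df{n}{\mmon{x}{m}}$ cannot be killed by a contribution from a different $\mmon{x}{m'}$ — and then translating the resulting equality of monomial sets into the equality of combinatorial degrees without circularity. There is no real analytic or combinatorial difficulty here; the corollary is essentially a formal consequence of Lemma~\ref{Lm:Disjoint} and linearity of $\df{n}{\cdot}$.
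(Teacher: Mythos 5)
Your proof is correct and is exactly the argument the paper intends: the corollary is left without proof there precisely because it follows, as you show, from the linearity of $\Delta^n$ together with the no-cancellation consequence of Lemma~\ref{Lm:Disjoint}. The only (harmless) imprecision is that the surviving coefficient of a monomial in $\df{n}{f}$ is $c(\mvar{m})$ times its coefficient in $\df{n}{\mmon{x}{m}}$ rather than $c(\mvar{m})$ itself, but that product is still nonzero, which is all that is needed.
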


We proceed to determine the combinatorial degree of monomials.

Let $\mvar{m}$, $\mvar{n}$ be two multiexponents. We write $\mvar{m}\le
\mvar{n}$ if $m_i\le n_i$ for every $1\le i\le d$. When $\mvar{m}\le\mvar{n}$,
$\mvar{n}-\mvar{m}$ stands for the multiexponent $(n_1-m_1$, $\dots$,
$n_d-m_d)$. We also let
\begin{displaymath}
    \binom{\mvar{m}}{\mvar{n}} = \prod_{i=1}^d \binom{m_i}{n_i}
    = \prod_{i=1}^d \frac{m_i!}{n_i!(m_i-n_i)!},
\end{displaymath}
with the usual convention $0!=1$.

The following lemma gives a critical insight into defects of monomials.

\begin{lemma}\label{Lm:Crucial}
Let $f(\mvar{x}) = \mmon{x}{m}\in F[\mvar{x}]$. Let $\mvar{x_1}$, $\dots$,
$\mvar{x_s}$ be multivariables. Then
\begin{equation}\label{Eq:Monomial}
    \df{s}{f}(\mvar{x_1},\dots,\mvar{x_s}) =
    \sum
    \binom{\mvar{m_1}}{\mvar{m_2}}\cdots
    \binom{\mvar{m_{s-1}}}{\mvar{m_s}}
    \mmon{x_1}{m_s}\mvar{x_2}^{\,\mvar{m_{s-1}}-\mvar{m_s}}\cdots
    \mvar{x_s}^{\,\mvar{m_1}-\mvar{m_2}},
\end{equation}
where the summation ranges over all chains of multiexponents
$\mvar{m}=\mvar{m_1}>\mvar{m_2}>\cdots >\mvar{m_s}>\mvar{0}$.
\end{lemma}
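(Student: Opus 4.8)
The plan is to prove \eqref{Eq:Monomial} by induction on $s$, using the recurrence \eqref{Eq:Recurrence}, which holds equally for the polynomial defect $\df{s}{f}$ since the latter is defined by the same inclusion--exclusion formula \eqref{Eq:PolForPol}. The base case $s=1$ is immediate: $\df{1}{f}(\mvar{x_1})=f(\mvar{x_1})=\mmon{x_1}{m}$, and since the standing convention $f(0)=0$ forces $\mvar{m}\neq\mvar{0}$, the right-hand side of \eqref{Eq:Monomial} reduces to the single chain $\mvar{m}=\mvar{m_1}>\mvar{0}$, contributing the empty product of binomial coefficients times $\mmon{x_1}{m_1}$, as required.

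For the inductive step I would assume \eqref{Eq:Monomial} with $s$ replaced by $s-1$ and apply it to each of the three terms on the right of \eqref{Eq:Recurrence}, with $\mvar{x_1}+\mvar{x_2}$, resp.\ $\mvar{x_1}$, resp.\ $\mvar{x_2}$, playing the role of the first variable and $\mvar{x_3},\dots,\mvar{x_s}$ the remaining ones. Thus $\df{s-1}{f}(\mvar{x_1}+\mvar{x_2},\mvar{x_3},\dots,\mvar{x_s})$ becomes a sum over chains $\mvar{m}=\mvar{m_1}>\cdots>\mvar{m_{s-1}}>\mvar{0}$ of
\[
\binom{\mvar{m_1}}{\mvar{m_2}}\cdots\binom{\mvar{m_{s-2}}}{\mvar{m_{s-1}}}\,(\mvar{x_1}+\mvar{x_2})^{\mvar{m_{s-1}}}\,\mvar{x_3}^{\,\mvar{m_{s-2}}-\mvar{m_{s-1}}}\cdots\mvar{x_s}^{\,\mvar{m_1}-\mvar{m_2}},
\]
and I would expand $(\mvar{x_1}+\mvar{x_2})^{\mvar{m_{s-1}}}=\sum_{\mvar{0}\le\mvar{k}\le\mvar{m_{s-1}}}\binom{\mvar{m_{s-1}}}{\mvar{k}}\,\mmon{x_1}{k}\,\mvar{x_2}^{\,\mvar{m_{s-1}}-\mvar{k}}$ via the multinomial theorem in each coordinate. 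On the other hand, $\df{s-1}{f}(\mvar{x_1},\mvar{x_3},\dots,\mvar{x_s})$ is, chain by chain, precisely the $\mvar{k}=\mvar{m_{s-1}}$ summand of that expansion, and $\df{s-1}{f}(\mvar{x_2},\mvar{x_3},\dots,\mvar{x_s})$ is precisely the $\mvar{k}=\mvar{0}$ summand (each with binomial coefficient $1$). Subtracting them leaves the double sum over chains $\mvar{m_1}>\cdots>\mvar{m_{s-1}}>\mvar{0}$ and over $\mvar{k}$ with $\mvar{0}\le\mvar{k}\le\mvar{m_{s-1}}$, $\mvar{k}\neq\mvar{0}$, $\mvar{k}\neq\mvar{m_{s-1}}$, that is, $\mvar{0}<\mvar{k}<\mvar{m_{s-1}}$ in the coordinatewise partial order.

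The last step is to relabel $\mvar{m_s}:=\mvar{k}$: the double sum turns into a single sum over chains $\mvar{m}=\mvar{m_1}>\cdots>\mvar{m_{s-1}}>\mvar{m_s}>\mvar{0}$, the product of binomials picks up the extra factor $\binom{\mvar{m_{s-1}}}{\mvar{m_s}}$, and the monomial becomes $\mmon{x_1}{m_s}\,\mvar{x_2}^{\,\mvar{m_{s-1}}-\mvar{m_s}}\,\mvar{x_3}^{\,\mvar{m_{s-2}}-\mvar{m_{s-1}}}\cdots\mvar{x_s}^{\,\mvar{m_1}-\mvar{m_2}}$, which is exactly the summand in \eqref{Eq:Monomial}. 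The whole argument is essentially bookkeeping, and I do not expect a genuine obstacle; in particular there is never any need to worry about cancellations among the summands, since we only rewrite one polynomial as another. The two points that require care are (i) remembering that \eqref{Eq:Recurrence} splits off the \emph{first} argument, so that in the inductive step $\mvar{x_1}$ and $\mvar{x_2}$ end up carrying the two smallest steps $\mvar{m_s}$ and $\mvar{m_{s-1}}-\mvar{m_s}$ of the chain while $\mvar{x_3},\dots,\mvar{x_s}$ inherit their exponents unchanged from the $(s-1)$-defect, and (ii) the passage from the weak inequalities produced by the multinomial expansion to the strict partial-order inequalities defining a chain.
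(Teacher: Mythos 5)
Your proposal is correct and follows essentially the same route as the paper: induction on $s$ via the recurrence \eqref{Eq:Recurrence}, with the coordinatewise binomial expansion of $(\mvar{x_1}+\mvar{x_2})^{\mvar{m_{s-1}}}$ extending each chain by one link after the $\mvar{k}=\mvar{0}$ and $\mvar{k}=\mvar{m_{s-1}}$ terms are cancelled by the two subtracted defects. The only cosmetic difference is that the paper starts the induction at $s=2$ and packages the expansion as the second defect $\df{2}{g_{\mvar{m_s}}}$ of an auxiliary monomial, whereas you start at $s=1$ and expand inline.
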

\begin{proof}
Straightforward calculation shows that
\begin{displaymath}
    (\mvar{x}+\mvar{y})^{\mvar{m}} = \prod_{i=1}^d(x_i+y_i)^{m_i}=
    \prod_{i=1}^d \sum_{n_i=0}^{m_i} \binom{m_i}{n_i} x_i^{n_i} y_i^{m_i-n_i}
\end{displaymath}
is equal to
\begin{displaymath}
    \sum_{0\le \mvar{n}\le \mvar{m}} \prod_{i=1}^d\binom{m_i}{n_i} x_1^{n_1}\cdots
    x_d^{n_d}y_1^{m_1-n_1}\cdots y_d^{m_d-n_d}
    = \sum_{0\le \mvar{n}\le \mvar{m}}
    \binom{\mvar{m}}{\mvar{n}}\mmon{x}{n}\mvar{y}^{\,\mvar{m}-\mvar{n}}.
\end{displaymath}
Since $\df{2}{f}(\mvar{x},\mvar{y}) = (\mvar{x}+\mvar{y})^{\mvar{m}} -
\mmon{x}{m} -\mmon{y}{m}$, the lemma follows for $s=2$.

Assume that the lemma holds for $s\ge 2$. Let $g_{\mvar{m_s}}(\mvar{x}) =
\mvar{x}^{\,\mvar{m_s}}$ and note that we have just proved
\begin{equation}\label{Eq:JustDid}
    \df{2}{g_{\mvar{m_s}}}(\mvar{x_1},\mvar{x_2}) = \sum_{0<\mvar{m_{s+1}}<\mvar{m_s}}
    \binom{\mvar{m_s}}{\mvar{m_{s+1}}}\mmon{x_1}{m_{s+1}}
    \mvar{x_2}^{\,\mvar{m_s}-\mvar{m_{s+1}}}.
\end{equation}
Using an analogy of (\ref{Eq:Recurrence}) for formal polynomials and the
induction assumption, we have
\begin{align*}
    &\df{s+1}{f}(\mvar{x_1},\dots,\mvar{x_{s+1}})\\
    &=\df{s}{f}(\mvar{x_1}+\mvar{x_2},\mvar{x_3},\dots,\mvar{x_{s+1}}) -
    \df{s}{f}(\mvar{x_1},\mvar{x_3},\dots,\mvar{x_{s+1}})
        - \df{s}{f}(\mvar{x_2},\mvar{x_3},\dots,\mvar{x_{s+1}})\\
    &=\sum
        \binom{\mvar{m_1}}{\mvar{m_2}}
        {\cdots}
        \binom{\mvar{m_{s-1}}}{\mvar{m_s}}
        ((\mvar{x_1}{+}\mvar{x_2})^{\,\mvar{m_s}}{-}\mvar{x_1}^{\,\mvar{m_s}}{-}
        \mvar{x_2}^{\,\mvar{m_s}} )
        \mvar{x_3}^{\,\mvar{m_{s-1}}-\mvar{m_s}}\cdots
        \mvar{x_{s+1}}^{\,\mvar{m_1}-\mvar{m_2}}\\
    &=\sum
        \binom{\mvar{m_1}}{\mvar{m_2}}
        {\cdots}
        \binom{\mvar{m_{s-1}}}{\mvar{m_s}}
        \df{2}{g_{\mvar{m_s}}}(\mvar{x_1},\mvar{x_2})
        \mvar{x_3}^{\,\mvar{m_{s-1}}-\mvar{m_s}}\cdots
        \mvar{x_{s+1}}^{\,\mvar{m_1}-\mvar{m_2}},
\end{align*}
where the summation ranges over all chains of multiexponents
$\mvar{m}=\mvar{m_1}>\mvar{m_2}>\cdots
>\mvar{m_s}>\mvar{0}$. We are done upon substituting (\ref{Eq:JustDid}) into
the last equation.
\end{proof}

Note that the multiexponents of $\mvar{x_1}$, $\mvar{x_2}$, $\cdots$,
$\mvar{x_s}$ in the sum of (\ref{Eq:Monomial}) are different for every chain
$\mvar{m}=\mvar{m_1}>\mvar{m_2}>\cdots >\mvar{m_s}>\mvar{0}$. Therefore, by
Lemma \ref{Lm:Disjoint}, the combinatorial degree of $\mmon{x}{m}$ is the
length $s$ of a longest chain $\mvar{m}=\mvar{m_1}>\mvar{m_2}>\cdots
>\mvar{m_s}>\mvar{0}$ satisfying
\begin{equation}\label{Eq:Regularity}
    \binom{\mvar{m_i}}{\mvar{m_{i+1}}}\ne 0
\end{equation}
for every $1\le i<s$, where the inequality is understood in $F$.

Let us call a chain $\mvar{m}=\mvar{m_1}>\mvar{m_2}>\cdots
>\mvar{m_s}>\mvar{0}$ of multiexponents satisfying (\ref{Eq:Regularity})
\emph{regular}.

\begin{lemma}\label{Lm:OneDimChains} Let $n=\sum_{i=0}^\infty n_ip^i$, where
$0\le n_i<p$ for every $i$. Then the length of a longest regular chain for
$\mvar{m}=(n)$ is $\omega_p(n)$.
\end{lemma}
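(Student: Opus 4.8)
The plan is to work in the single-variable case $d=1$, where a multiexponent is just a nonnegative integer, the partial order $\le$ on multiexponents is the usual order on $\mathbb Z_{\ge 0}$, and the regularity condition \eqref{Eq:Regularity} for a step $m_i > m_{i+1}$ reads $\binom{m_i}{m_{i+1}}\ne 0$ in $F$, i.e. $p\nmid\binom{m_i}{m_{i+1}}$ (vacuously true when $p=\infty$). By Lucas' theorem, $p\nmid\binom{a}{b}$ if and only if every $p$-adic digit of $b$ is $\le$ the corresponding $p$-adic digit of $a$; equivalently, the base-$p$ subtraction $a-b$ involves no borrowing. So I must show: the longest chain $n = m_1 > m_2 > \cdots > m_s > 0$ in which every consecutive difference is ``borrow-free'' in base $p$ has length $s = \omega_p(n) = n_0 + n_1 + n_2 + \cdots$.

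For the lower bound $s \ge \omega_p(n)$, I would exhibit an explicit regular chain of that length: repeatedly subtract $1$ from the lowest nonzero digit. Concretely, order the ``digit units'' of $n$ as follows — take $n_0$ copies of $p^0$, then $n_1$ copies of $p^1$, and so on — and let $m_{i+1}$ be obtained from $m_i$ by subtracting the next unit in this list. At each step we are subtracting $p^j$ from a number whose $j$-th digit is currently nonzero (because we exhaust the lower digits before touching digit $j$, and $n$'s own digit there is $n_j$), so no borrowing occurs and $\binom{m_i}{m_{i+1}}\ne 0$ in $F$. This produces a regular chain from $n$ down to $0$ of length exactly $n_0+n_1+\cdots = \omega_p(n)$; deleting the final $0$ leaves a regular chain $n=m_1>\cdots>m_s>0$ with $s=\omega_p(n)$ (one checks the trivial case $n<p$, where $\omega_p(n)=n$, separately, and it agrees).

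For the upper bound $s \le \omega_p(n)$, the key observation is that a borrow-free subtraction in base $p$ is digit-wise: if $m_{i+1}=m_i-e$ with $\binom{m_i}{m_{i+1}}\ne 0$, then the digits of $e$ are each $\le$ the corresponding digit of $m_i$, and moreover the digits of $m_{i+1}$ are exactly (digit of $m_i$) minus (digit of $e$), with no interaction between digit positions. Hence along a regular chain the quantity $\omega_p(m_i)$ is \emph{strictly decreasing}: passing from $m_i$ to $m_{i+1}=m_i-e$ decreases $\omega_p$ by $\omega_p(e)\ge 1$, since $e>0$. Starting from $\omega_p(m_1)=\omega_p(n)$ and ending at $\omega_p(m_s)\ge 1$ (as $m_s>0$), a strictly decreasing integer sequence of length $s$ forces $s \le \omega_p(n)$. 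Combining the two bounds gives $s=\omega_p(n)$.

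The main obstacle is simply making precise the ``digit-wise'' behavior of borrow-free subtraction and its compatibility with $\omega_p$ — i.e. the clean statement that $\binom{m_i}{m_{i+1}}\ne 0$ in $F$ forces $\omega_p(m_i) = \omega_p(m_{i+1}) + \omega_p(m_i - m_{i+1})$ — and then invoking it uniformly, including the degenerate case $p=\infty$ (where every chain is regular, $\omega_\infty(n)=n$, and the longest chain $n>n-1>\cdots>1$ indeed has length $n$). Everything else is a short induction or a direct count. Once the digit-wise lemma is in hand, both inequalities are immediate, so I would state and prove that lemma first and then dispatch the two bounds in a few lines each.
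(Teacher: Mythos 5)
Your proposal is correct and follows essentially the same route as the paper: Lucas' theorem gives digit-wise domination, hence the $p$-weight strictly decreases along a regular chain (upper bound), and the explicit chain that lowers one $p$-adic digit at a time realizes length $\omega_p(n)$ (lower bound). Your additivity observation $\omega_p(m_i)=\omega_p(m_{i+1})+\omega_p(m_i-m_{i+1})$ just makes explicit what the paper leaves implicit, and the $p=\infty$ case is handled the same way.
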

\begin{proof}
There is nothing to prove in characteristic $p=\infty$. Assume that $p<\infty$,
and let $a=\sum_{i=0}^\infty a_ip^i$, $b=\sum_{i=0}^\infty b_ip^i$ be two
integers with $0\le a_i$, $b_i<p$ for every $i$. By Lucas Theorem \cite{Fine},
\begin{displaymath}
    \binom{a}{b} \equiv \prod_{i=0}^\infty \binom{a_i}{b_i}
\end{displaymath}
modulo $p$. Consequently, if $\binom{a}{b}\not\equiv 0$, we must have $a_i\ge
b_i$ for every $i$ since $\binom{a_i}{b_i}$ is not divisible by $p$.

Hence the length $t$ of a longest regular chain for $n$ cannot exceed
$\weight_p(n) = \sum_{i=0}^\infty n_i$. On the other hand, $t\ge \weight_p(n)$
holds, because we can construct a regular chain for $n$ of length
$\weight_p(n)$ by reducing one of the $n_i$s by one in each step.
\end{proof}

\begin{lemma}\label{Lm:Chains}
Let $\mvar{m}=(m_1$, $\dots$, $m_d)$ be a multiexponent. Let
$\mvar{m}=\mvar{m_1}>\mvar{m_2}>\cdots>\mvar{m_s}>\mvar{0}$ be a longest
regular chain for $\mvar{m}$. Then $\mvar{m_i}$, $\mvar{m_{i+1}}$ differ in
exactly one exponent for every $1\le i<s$, and $s=\sum_{i=1}^d \weight_p(m_i)$,
where $p=\chr{F}\le \infty$.
\end{lemma}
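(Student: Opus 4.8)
The plan is to prove both assertions of the lemma at once, by a counting argument that reduces the $d$-dimensional problem to $d$ one-dimensional instances of Lemma~\ref{Lm:OneDimChains}. First I would determine the maximal possible length of a regular chain for $\mvar{m}$, and then read off the ``differ in exactly one exponent'' statement from the fact that, for a longest chain, this bound is attained with equality throughout.

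For the upper bound, write a regular chain for $\mvar{m}$ as $\mvar{m}=\mvar{m_1}>\mvar{m_2}>\cdots>\mvar{m_s}>\mvar{m_{s+1}}=\mvar{0}$, so that it has exactly $s$ descent steps $\mvar{m_i}\to\mvar{m_{i+1}}$, and note that $\binom{\mvar{m_i}}{\mvar{m_{i+1}}}=\prod_{j=1}^d\binom{(\mvar{m_i})_j}{(\mvar{m_{i+1}})_j}$ is nonzero in $F$ for every $1\le i\le s$ (this is hypothesis~(\ref{Eq:Regularity}) for $i<s$, and trivial for $i=s$). For a fixed coordinate $j$, let $c_j$ be the number of steps at which the $j$th entry strictly decreases. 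Since that entry is nonincreasing along the chain and ends at $0$, its distinct values form a strictly decreasing sequence $m_j=b^{(j)}_0>b^{(j)}_1>\cdots>b^{(j)}_{c_j}=0$; each consecutive pair $b^{(j)}_k>b^{(j)}_{k+1}$ occurs as $(\mvar{m_i})_j>(\mvar{m_{i+1}})_j$ at some descent step, so $\binom{b^{(j)}_k}{b^{(j)}_{k+1}}$ is a factor of the nonzero number $\binom{\mvar{m_i}}{\mvar{m_{i+1}}}$ and hence is nonzero in $F$. Thus this sequence is a regular one-dimensional chain for $m_j$, and Lemma~\ref{Lm:OneDimChains} gives $c_j\le\weight_p(m_j)$. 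Because every descent step changes at least one coordinate, $\sum_{j=1}^d c_j\ge s$, and therefore $s\le\sum_{j=1}^d\weight_p(m_j)$.

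For the matching lower bound I would exhibit a regular chain of length $\sum_{j=1}^d\weight_p(m_j)$ by handling the coordinates one at a time: using the construction in the proof of Lemma~\ref{Lm:OneDimChains}, lower the first entry from $m_1$ down to $0$ in $\weight_p(m_1)$ steps with the remaining entries frozen, then do the same to the second entry, and so on. At each step exactly one entry changes, its binomial factor is nonzero and all other factors equal $1$, so the concatenation is a regular chain of length $\sum_{j=1}^d\weight_p(m_j)$. Hence a longest regular chain for $\mvar{m}$ has length exactly $s=\sum_{j=1}^d\weight_p(m_j)$, which is the second assertion.

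Finally, for the first assertion: if the given chain is longest, then $s=\sum_j\weight_p(m_j)$, so the chain of inequalities $s\le\sum_j c_j\le\sum_j\weight_p(m_j)$ collapses to equalities; in particular $\sum_j c_j=s$, and since each of the $s$ descent steps changes at least one coordinate, each changes exactly one. In particular $\mvar{m_i}$ and $\mvar{m_{i+1}}$ differ in a single exponent for every $1\le i<s$. I anticipate no real obstacle; the one point that needs care is the bookkeeping in the upper bound — passing from the merely nonincreasing coordinate sequence to its sequence of distinct values, with the terminal $0$ retained, so as to land on a regular chain in the precise sense of Lemma~\ref{Lm:OneDimChains}, and counting ``length'' consistently between the one- and $d$-dimensional settings.
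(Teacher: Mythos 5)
Your proof is correct and rests on the same decomposition as the paper's: projecting the chain onto each coordinate and invoking Lemma~\ref{Lm:OneDimChains}. The only difference is the order of deduction --- the paper first shows a longest chain cannot have a step changing two exponents (splitting such a step yields a longer regular chain) and then reads off the length, while you first prove the length formula via the counting bound $s\le\sum_j c_j\le\sum_j\weight_p(m_j)$ and deduce the one-exponent-per-step claim from its equality case; your version also makes explicit the point, left implicit in the paper, that each one-dimensional factor $\binom{(\mvar{m_i})_j}{(\mvar{m_{i+1}})_j}$ is nonzero because $F$ has no zero divisors.
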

\begin{proof}
If $\mvar{m_i}$, $\mvar{m_{i+1}}$ differ in two exponents, we can construct a
longer regular chain by reducing the powers separately. Thus, given the regular
chain $\mvar{m}=\mvar{m_1}>\cdots>\mvar{m_s}>\mvar{0}$, we can construct
another regular chain for $\mvar{m}$ of length $s$, in which we first reduce
only the first exponent, then the second exponent, etc. We are done by Lemma
\ref{Lm:OneDimChains}.
\end{proof}

\begin{example}\label{Ex:Chain} Let us construct a longest regular chain for $(7,4)$ in
characteristic $p=3$. Since $7 = 1\cdot 3^0 + 2\cdot 3^1$ and $4=1\cdot 3^0 +
1\cdot 3^1$, the procedure outlined in the proof of Lemma $\ref{Lm:Chains}$
yields the chain $(7,4)>(4,4)>(1,4)>(0,4)>(0,1)>(0,0)$, for instance. The chain
has length $5=\weight_3(7)+\weight_3(4)$, as expected.
\end{example}

We summarize Corollary \ref{Cr:Disjoint} and Lemmas \ref{Lm:Crucial},
\ref{Lm:Chains}:

\begin{theorem}[Combinatorial degree of formal polynomials]\label{Th:CombDeg}
Let $f\in F[\mvar{x}]$ be a polynomial satisfying $f(0)=0$, and let
$\chr{F}=p\le\infty$. Then $\combdeg{f} = \deg_p(f)$.
\end{theorem}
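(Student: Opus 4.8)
The plan is to assemble the proof directly from the three ingredients already developed, with essentially no new computation required. First I would invoke Corollary \ref{Cr:Disjoint}, which reduces the claim to the case of a single monomial: since $\combdeg{f} = \max\{\combdeg{\mmon{x}{m}};\ \mvar{m}\in M(f)\}$ and $\deg_p(f) = \max\{\deg_p(\mmon{x}{m});\ \mvar{m}\in M(f)\}$ by definition, it suffices to show $\combdeg{\mmon{x}{m}} = \deg_p(\mmon{x}{m}) = \sum_{i=1}^d \weight_p(m_i)$ for every multiexponent $\mvar{m}$.

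Next I would recall the key consequence of Lemma \ref{Lm:Crucial} noted immediately after its proof: because the multiexponents of $\mvar{x_1},\dots,\mvar{x_s}$ attached to distinct chains $\mvar{m}=\mvar{m_1}>\cdots>\mvar{m_s}>\mvar{0}$ are pairwise distinct, Lemma \ref{Lm:Disjoint} guarantees that the corresponding monomials in $\df{s}{(\mmon{x}{m})}$ cannot cancel one another. Hence $\df{s}{(\mmon{x}{m})}\ne 0$ precisely when there exists at least one \emph{regular} chain of length $s$ for $\mvar{m}$, i.e.\ a chain with all binomial coefficients $\binom{\mvar{m_i}}{\mvar{m_{i+1}}}$ nonzero in $F$. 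Therefore $\combdeg{\mmon{x}{m}}$ equals the length of a longest regular chain for $\mvar{m}$.

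Finally I would apply Lemma \ref{Lm:Chains}, which computes exactly this quantity: the length of a longest regular chain for $\mvar{m}=(m_1,\dots,m_d)$ is $\sum_{i=1}^d \weight_p(m_i)$, where $p=\chr{F}\le\infty$. Combining the three displayed equalities gives $\combdeg{\mmon{x}{m}} = \sum_{i=1}^d \weight_p(m_i) = \deg_p(\mmon{x}{m})$, and taking the maximum over $\mvar{m}\in M(f)$ yields $\combdeg{f} = \deg_p(f)$, as claimed.

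There is no serious obstacle here — the statement is, as the preamble to the theorem explicitly announces, a summary of Corollary \ref{Cr:Disjoint} and Lemmas \ref{Lm:Crucial} and \ref{Lm:Chains}. The only point that deserves a sentence of care is the no-cancellation argument: one must be sure that distinct regular chains contribute monomials with distinct multiexponents in the auxiliary variables, so that a regular chain of maximal length genuinely forces $\df{s}{f}\ne 0$ rather than merely producing terms that might conspire to vanish. This is exactly what the remark following Lemma \ref{Lm:Crucial} provides via Lemma \ref{Lm:Disjoint}, so the proof is a short matter of citing these results in sequence.
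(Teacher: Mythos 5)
Your proposal is correct and follows exactly the paper's route: the paper gives no separate proof but explicitly presents the theorem as a summary of Corollary \ref{Cr:Disjoint} and Lemmas \ref{Lm:Crucial} and \ref{Lm:Chains}, with the no-cancellation point handled by the remark following Lemma \ref{Lm:Crucial}, just as you describe. Nothing is missing.
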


We now return to combinatorial polarization of polynomial mappings. First
observe:

\begin{lemma}\label{Lm:Reducedfn}
Let $f\in F[\mvar{x}]$ be a reduced polynomial satisfying $f(0)=0$. Then
$\df{n}{f}\in F[\mvar{x_1},\dots,\mvar{x_n}]$ is a reduced polynomial for every
$n\ge 1$.
\end{lemma}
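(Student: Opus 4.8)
The plan is to reduce the statement about $\df{n}{f}$ to a statement about a single monomial, using the linearity of combinatorial polarization together with Lemma~\ref{Lm:Disjoint}. Since $f$ is a finite sum of reduced monomials $\mmon{x}{m}$ with $0\le m_i<|F|$, and since $\df{n}{f}$ is the corresponding sum of the $\df{n}{(\mmon{x}{m})}$, and since by Lemma~\ref{Lm:Disjoint} these summands have pairwise disjoint multiexponent supports (in the block variables $\mvar{x_1},\dots,\mvar{x_n}$), it suffices to show that $\df{n}{(\mmon{x}{m})}$ is reduced whenever $0\le m_i<|F|$ for all $i$. A polynomial is reduced precisely when every exponent of every variable appearing in it lies strictly below $|F|$ (when $F$ is infinite there is nothing to check, as ``reduced'' then imposes no constraint — though one should still observe the argument goes through vacuously).

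Next I would invoke Lemma~\ref{Lm:Crucial}, which gives the explicit expansion
\begin{displaymath}
    \df{n}{(\mmon{x}{m})}(\mvar{x_1},\dots,\mvar{x_n}) =
    \sum
    \binom{\mvar{m_1}}{\mvar{m_2}}\cdots
    \binom{\mvar{m_{n-1}}}{\mvar{m_n}}
    \mmon{x_1}{m_n}\mvar{x_2}^{\,\mvar{m_{n-1}}-\mvar{m_n}}\cdots
    \mvar{x_n}^{\,\mvar{m_1}-\mvar{m_2}},
\end{displaymath}
the sum running over chains $\mvar{m}=\mvar{m_1}>\mvar{m_2}>\cdots>\mvar{m_n}>\mvar{0}$. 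The key observation is that in every such term, the exponent of the variable $x_{j,i}$ (the $i$th coordinate of block $j$) is one of the quantities $m_{n,i}$, or a difference $m_{k,i}-m_{k+1,i}$, for the chain in question. Each of these is bounded above by $m_{1,i}=m_i$: indeed $m_{n,i}\le m_i$ since the chain is decreasing in the product order, and likewise $m_{k,i}-m_{k+1,i}\le m_{k,i}\le m_i$. Hence every exponent appearing in $\df{n}{(\mmon{x}{m})}$ is at most $m_i<|F|$, so each monomial of the expansion is already reduced in every variable.

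Finally, one must be slightly careful that passing from ``each monomial of the expansion is reduced'' to ``$\df{n}{(\mmon{x}{m})}$ is a reduced polynomial'' is legitimate: distinct chains may a priori produce the same monomial in the $x_{j,i}$, so there could be cancellation among the coefficients $\binom{\mvar{m_1}}{\mvar{m_2}}\cdots\binom{\mvar{m_{n-1}}}{\mvar{m_n}}$, but this only makes the polynomial \emph{sparser}, never introduces a higher exponent; the exponent bound survives. So $\df{n}{(\mmon{x}{m})}$ is reduced, and summing over $\mvar{m}\in M(f)$ — with disjoint supports by Lemma~\ref{Lm:Disjoint} — the whole $\df{n}{f}$ is reduced. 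I expect the only genuinely delicate point is keeping the bookkeeping straight between the ``outer'' variables $x_1,\dots,x_d$ of $f$ and the ``inner'' block variables $x_{j,i}$ of $\df{n}{f}$, and confirming that the bound $m_{k,i}-m_{k+1,i}\le m_i$ (rather than something like $m_i$ being split additively across blocks) is what the Lemma~\ref{Lm:Crucial} expansion actually delivers — which it does, since consecutive differences in a decreasing chain are each dominated by the top term.
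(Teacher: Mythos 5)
Your proof is correct. Note that the paper states Lemma \ref{Lm:Reducedfn} without proof, so there is no argument of the authors to compare against; your route via Lemma \ref{Lm:Crucial} is a legitimate way to fill that gap, and the key step --- every exponent occurring in the chain expansion is either $m_{n,i}$ or a consecutive difference $m_{k,i}-m_{k+1,i}$, hence at most $m_i<|F|$ --- is sound (indeed these exponents telescope to $m_i$ along each block, so they are nonnegative integers summing to $m_i$). Two small remarks. First, the appeal to Lemma \ref{Lm:Disjoint} is unnecessary for this lemma: the property that every exponent is $<|F|$ is preserved under arbitrary sums of polynomials, since cancellation can only delete monomials, never raise an exponent; disjointness of supports is needed elsewhere (for computing combinatorial degree) but not here. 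Second, one can bypass Lemma \ref{Lm:Crucial} entirely: for a monomial $f=\mmon{x}{m}$, each summand $f(\mvar{x_{i_1}}+\cdots+\mvar{x_{i_m}})$ of (\ref{Eq:PolForPol}) is a product of multinomial expansions of $(x_{i_1,j}+\cdots+x_{i_m,j})^{m_j}$, in which the exponents of the block variables in position $j$ sum to $m_j$ and are therefore each at most $m_j<|F|$ --- this is the same counting already used in the proof of Lemma \ref{Lm:Disjoint}. Either way the conclusion follows.
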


\begin{lemma}
Let $\alpha:V\to F$ be a polynomial mapping satisfying $\alpha(0)=0$, and
assume that the reduced polynomial $f\in F[\mvar{x}]$ represents $\alpha$ with
respect to some basis of $V$. Then $\combdeg{f}=\combdeg{\alpha}$.
\end{lemma}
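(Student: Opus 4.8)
The plan is to connect the combinatorial degree of the polynomial mapping $\alpha$ directly to the combinatorial degree of its reduced representative $f$, using the fact that combinatorial polarization commutes with the passage from formal polynomials to polynomial functions. First I would observe that if $f\in F[\mvar{x}]$ realizes $\alpha$ with respect to a basis $B=\{e_1,\dots,e_d\}$, then the formal defect $\df{n}{f}\in F[\mvar{x_1},\dots,\mvar{x_n}]$, evaluated at the coordinate vectors of $u_1,\dots,u_n\in V$, computes exactly $\df{n}{\alpha}(u_1,\dots,u_n)$. This is immediate by comparing the defining sums \eqref{Eq:CombPol} and \eqref{Eq:PolForPol}: each term $\alpha(u_{i_1}+\cdots+u_{i_m})$ equals $f$ evaluated at the sum of the corresponding coordinate multivariables, because $\alpha$ is realized by $f$ and coordinates are additive. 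Hence $\df{n}{f}$ is a polynomial (in $nd$ variables) that realizes the form $\df{n}{\alpha}$ with respect to the obvious product basis.

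Next I would use this to compare the two combinatorial degrees. Since $f$ is reduced, Lemma~\ref{Lm:Reducedfn} guarantees that $\df{n}{f}$ is reduced for every $n\ge 1$. By Lemma~\ref{Lm:Reduced}, a reduced polynomial is the zero polynomial if and only if it is the zero function. Therefore $\df{n}{f}=0$ as a formal polynomial precisely when $\df{n}{\alpha}=0$ as a form on $V^n$. Applying this at $n$ and at $n+1$ and recalling the definitions of $\combdeg{f}$ and $\combdeg{\alpha}$ (together with the fact, noted after \eqref{Eq:Recurrence}, that $\df{m}{}$ vanishing propagates upward), we conclude that $\combdeg{f}=\combdeg{\alpha}$. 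The degenerate cases are easy: if $f$ is the zero reduced polynomial then $\alpha$ is the zero map and both degrees are $-1$, consistent with the conventions.

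The main obstacle — or rather the one point requiring care — is making sure the reducedness of $\df{n}{f}$ is genuinely available in the $nd$-variable polynomial ring, so that Lemma~\ref{Lm:Reduced} can be applied there and not just in $F[\mvar{x}]$; but this is exactly the content of Lemma~\ref{Lm:Reducedfn}, so no new work is needed. One should also note that the statement is independent of the chosen basis: by Lemma~\ref{Lm:ChOB2} a change of basis replaces $f$ by another reduced polynomial $f^*$ realizing $\alpha$, and the argument above applied to $f^*$ gives $\combdeg{f^*}=\combdeg{\alpha}=\combdeg{f}$, so the equality holds for every reduced representative. Combined with Theorem~\ref{Th:CombDeg}, this immediately yields $\combdeg{\alpha}=\deg_p(f)$, which is the corollary the authors announce.
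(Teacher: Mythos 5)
Your proof is correct and takes essentially the same approach as the paper's: both rest on the identity expressing $\df{n}{\alpha}(u_1,\dots,u_n)$ as the evaluation of the formal defect $\df{n}{f}$ at coordinates, and both use Lemma \ref{Lm:Reducedfn} together with Lemma \ref{Lm:Reduced} to pass between vanishing of $\df{n}{f}$ as a polynomial and vanishing of $\df{n}{\alpha}$ as a function. No changes are needed.
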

\begin{proof}
Let $\{e_1$, $\dots$, $e_d\}$ be the underlying basis. Let $n\ge 1$, and $u_i =
\sum_j a_{ij}e_j$. Then
\begin{multline}\label{Eq:FormPol}
    \df{n}{\alpha}(u_1,\dots,u_n) = \alpha\bigl(
        \sum_j a_{1j}e_1,\dots,\sum_j a_{nj}e_j\bigr)\\
    = \df{n}{f}( a_{11},\dots,a_{1d},\dots,a_{n1},\dots,a_{nd} ).
\end{multline}
This equality implies $\combdeg{f}\ge \combdeg{\alpha}$, since if
$\df{n}{\alpha}\ne 0$ then $\df{n}{f}$ is a nonzero function and thus a nonzero
polynomial.

On the other hand, assume that $n=\combdeg{f}$. Then $\df{n}{f}$ is a nonzero
polynomial that is reduced by Lemma \ref{Lm:Reducedfn}. Thus $\df{n}{f}$ is a
nonzero function by Lemma \ref{Lm:Reduced}, and (\ref{Eq:FormPol}) implies that
$\combdeg{\alpha}\ge n = \combdeg{f}$.
\end{proof}

\begin{corollary}[Combinatorial degree of polynomial mappings]\label{Cr:CombDeg}
Let $V$ be a vector space over a field $F$ of characteristic $p\le\infty$, and
let $\alpha:V\to F$ be a nonzero polynomial mapping satisfying $\alpha(0)=0$.
Then $\combdeg{\alpha}$ is equal to $\deg_p(f)$, where $f\in F[x_1,\dots,x_d]$
is a reduced polynomial that realizes $\alpha$ with respect to some basis of
$V$.
\end{corollary}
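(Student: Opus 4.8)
The plan is to obtain the statement by simply chaining the two results that immediately precede it, so there is very little genuine work to do here. The reduced polynomial $f$ that realizes $\alpha$ with respect to the chosen basis satisfies $f(0,\dots,0)=\alpha(0)=0$, and $f\ne 0$ because $\alpha\ne 0$ (a reduced polynomial realizing the zero map would be zero by Lemma \ref{Lm:Reduced}). Thus $f$ meets the standing hypothesis $f(0)=0$ under which combinatorial degree is defined.

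With those hypotheses in place, I would first invoke the Lemma preceding this corollary, which gives $\combdeg{f}=\combdeg{\alpha}$, using precisely that $f$ is reduced and represents $\alpha$ with respect to some basis of $V$. Next I would apply Theorem \ref{Th:CombDeg} to the polynomial $f$, which yields $\combdeg{f}=\deg_p(f)$ where $p=\chr{F}\le\infty$. Combining the two equalities gives $\combdeg{\alpha}=\combdeg{f}=\deg_p(f)$, which is the assertion.

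It is worth pointing out, as a free consequence of this chain, that since $\combdeg{\alpha}$ is intrinsic to $\alpha$, the quantity $\deg_p(f)$ does not depend on the choice of basis, even though $f$ itself does; this complements the earlier observation (after Lemma \ref{Lm:ChOB2} and after Theorem \ref{Th:Realization}) that ordinary degree is basis-invariant while the property ``all exponents less than $\chr{F}$'' need not be. I do not anticipate any real obstacle: all the substance is already carried by Theorem \ref{Th:CombDeg} (and, underneath it, Lemmas \ref{Lm:Crucial} and \ref{Lm:Chains}) together with the translation between formal polynomials and polynomial mappings afforded by Lemmas \ref{Lm:Reduced} and \ref{Lm:Reducedfn}; the corollary is a bookkeeping statement that packages these for later use.
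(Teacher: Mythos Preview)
Your proposal is correct and matches the paper's approach exactly: the corollary is stated without proof in the paper, being an immediate consequence of the preceding (unnumbered) lemma giving $\combdeg{f}=\combdeg{\alpha}$ together with Theorem~\ref{Th:CombDeg} giving $\combdeg{f}=\deg_p(f)$. Your additional remarks on basis-independence are accurate and harmless, though the paper does not spell them out at this point.
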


\section{Polynomial $n$-applications}\label{Sc:PolyApps}

\subsection{Totally reduced polynomials}

We have already established that the degree of a polynomial mapping is
well-defined, cf. Lemma \ref{Lm:ChOB2}. By Corollary \ref{Cr:CombDeg}, the
combinatorial degree is also well-defined for polynomial mappings.

However, one has to be careful with even the most common concepts, such as the
property of being homogeneous. To wit, consider the polynomial mapping
$\alpha:\gf{4}^2\to\gf{4}$ defined by $\alpha(a_1e_1+a_2e_2) = a_1^2a_2^2$ with
respect to some basis $\{e_1,e_2\}$ of $\gf{4}^2$ over $\gf{4}$. Then
\begin{multline*}
    \alpha(a_1(e_1+e_2)+a_2e_2) = \alpha(a_1e_1 + (a_1+a_2)e_2) =\\
    a_1^2(a_1+a_2)^2 = a_1^4 + a_1^2a_2^2 = a_1+a_1^2a_2^2.
\end{multline*}
Thus, as a reduced polynomial, $\alpha$ is homogeneous with respect to
$\{e_1,e_2\}$ but not with respect to $\{e_1+e_2,e_2\}$. Of course, no
difficulties arise with respect to homogeneity if we do not insist that
polynomials be reduced.

Let us consider another property of polynomials familiar to us from Theorem
\ref{Th:Realization}: A polynomial $f\in F[x_1,\dots,x_d]$ is \emph{totally
reduced} if for every $\mvar{m}\in M(f)$ and every $1\le i\le d$ we have $0\le
m_i<\chr{F}$.

Theorem \ref{Th:CombDeg} implies immediately:

\begin{corollary}\label{Cr:TotRed}
Let $f\in F[\mvar{x}]$ be a monomial. Then $\combdeg{f}\le\deg(f)$, and the
equality holds if and only if $f$ is totally reduced.
\end{corollary}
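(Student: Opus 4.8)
The plan is to specialize Theorem \ref{Th:CombDeg} to a single monomial $f(\mvar{x}) = \mmon{x}{m}$ and compare the two quantities $\deg(f) = m_1 + \cdots + m_d$ and $\combdeg{f} = \deg_p(f) = \sum_{i=1}^d \weight_p(m_i)$, where $p = \chr{F}$. First I would observe that for any nonnegative integer $t$ with $p$-adic digits $t_0, t_1, t_2, \dots$, we have $t = \sum_i t_i p^i \ge \sum_i t_i = \weight_p(t)$, with equality precisely when all higher digits vanish, i.e.\ when $0 \le t < p$. (When $p = \infty$ this reads $t = t_0 = \weight_p(t)$ always, and a monomial is automatically totally reduced since the condition $0 \le m_i < \infty$ is vacuous; so the statement is trivial in that case and one works with $p$ finite.)

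Summing this digit-wise inequality over the $d$ exponents gives $\deg(f) = \sum_i m_i \ge \sum_i \weight_p(m_i) = \deg_p(f)$, and by Theorem \ref{Th:CombDeg} the right-hand side is exactly $\combdeg{f}$; this establishes $\combdeg{f} \le \deg(f)$. For the equality clause, note that the sum $\sum_i (m_i - \weight_p(m_i))$ is a sum of nonnegative terms, so it is zero if and only if every term is zero, i.e.\ if and only if $m_i = \weight_p(m_i)$ for every $i$, which by the previous paragraph holds if and only if $0 \le m_i < p$ for every $i$ — precisely the condition that $f$ be totally reduced.

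The only point requiring any care is keeping the convention $p = \infty$ straight: I would dispose of it at the outset as above and then assume $p$ finite, so that the elementary fact ``$t \ge \weight_p(t)$ with equality iff $t < p$'' is literally a statement about base-$p$ expansions. There is no real obstacle here — the corollary is a direct arithmetic consequence of Theorem \ref{Th:CombDeg} together with this one-line observation about $p$-weights.
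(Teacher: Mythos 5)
Your proposal is correct and follows exactly the route the paper intends: the paper derives Corollary \ref{Cr:TotRed} immediately from Theorem \ref{Th:CombDeg}, and your digit-wise observation that $t\ge\weight_p(t)$ with equality precisely when $0\le t<p$ (plus the trivial $p=\infty$ case) is just the arithmetic that makes ``immediately'' true. Nothing is missing.
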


Now, the polynomial mapping $\beta:\gf{4}^2\to\gf{4}$ defined by
$\beta(a_1e_1+a_2e_2) = a_1a_2$ is totally reduced with respect to
$\{e_1,e_2\}$, but
\begin{displaymath}
    \beta(a_1(e_1+e_2)+a_2e_2) = \beta(a_1e_1 + (a_1+a_2)e_2) = a_1(a_1+a_2) =
    a_1^2 + a_1a_2
\end{displaymath}
shows that $\beta$ is not totally reduced with respect to $\{e_1+e_2,e_2\}$.
Hence being totally reduced is not a property of polynomial mappings. But we
have:

\begin{lemma}\label{Lm:WellDefined}
Let $\alpha:V\to F$ be a polynomial mapping satisfying $\alpha(0)=0$ and
realized with respect to the basis $B$ (respectively $B^*$) by the reduced
polynomial $f$ (respectively $f^*$). Assume that every monomial $g$ of $f$
satisfying $\combdeg{g}=\combdeg{f}$ is totally reduced. Then every monomial
$g^*$ of $f^*$ satisfying $\combdeg{g^*}=\combdeg{f^*}$ is totally reduced.
\end{lemma}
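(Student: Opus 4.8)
The plan is to reduce the general change of basis to the two elementary operations already used in the proof of Lemma \ref{Lm:ChOB2}: scaling a single basis vector, and replacing $e_1$ by $e_1+e_2$. The scaling case is trivial, since $\alpha(\sum a_i e_i^*) = f(c a_1, a_2, \dots, a_d)$ merely rescales coefficients of monomials, changing neither the set $M(f)$ nor any individual exponent, hence preserving both the combinatorial degree of each monomial and the property of being totally reduced. So everything rests on the case $e_1^* = e_1 + e_2$, $e_i^* = e_i$ for $i>1$, where we must analyze $f^*(\mvar{x})$, the reduction of $f(x_1, x_1+x_2, x_3, \dots, x_d)$.

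First I would recall from Theorem \ref{Th:CombDeg} and Corollary \ref{Cr:CombDeg} that $\combdeg{f} = \combdeg{f^*} = \combdeg{\alpha} =: c$, and from Corollary \ref{Cr:TotRed} that a monomial $g$ satisfies $\combdeg{g} = \deg(g)$ exactly when $g$ is totally reduced, and always $\combdeg{g}\le\deg(g)$. Thus the monomials $g$ of $f$ with $\combdeg{g}=c$ that are totally reduced are precisely the monomials of $f$ of (ordinary) degree $c$; the hypothesis says $f$ has \emph{no} monomial of ordinary degree exceeding $c$, equivalently $\deg(f) = c = \combdeg{f}$, and $f$ is totally reduced in its top-degree part. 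By Lemma \ref{Lm:ChOB2}, $\deg(f^*) = \deg(f) = c$. Now the monomials $g^*$ of $f^*$ with $\combdeg{g^*} = c$ satisfy $\combdeg{g^*} = c = \deg(f^*) \ge \deg(g^*) \ge \combdeg{g^*} = c$, forcing $\deg(g^*) = \combdeg{g^*}$, so by Corollary \ref{Cr:TotRed} every such $g^*$ is totally reduced.

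The one gap to fill carefully is the claim $\deg(f^*) = c$, i.e. that passing to $f^*$ does not \emph{raise} the ordinary degree; but this is exactly Lemma \ref{Lm:ChOB2}, which guarantees $\deg(f^*) = \deg(f)$ for the reduced representatives. (The subtlety is that the unreduced substitution $f(x_1, x_1+x_2, \dots)$ can produce high powers like $x_1^k$ which, upon reduction mod $|F|-1$, collapse back to degree $\le c$; Lemma \ref{Lm:ChOB2} already accounts for this.) I expect the main obstacle to be purely expository: making the chain of (in)equalities $\combdeg{g^*}\le\deg(g^*)\le\deg(f^*)=\deg(f)=\combdeg{f}=\combdeg{f^*}$ land cleanly, and being explicit that the hypothesis ``every top-$\combdeg$ monomial of $f$ is totally reduced'' is equivalent to $\deg(f)=\combdeg(f)$, so that the conclusion follows by the same equivalence applied to $f^*$. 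Once that equivalence is stated, the proof is three lines plus the reduction to elementary operations.
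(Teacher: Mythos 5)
There is a genuine gap. Your argument pivots on the claim that the hypothesis ``every monomial $g$ of $f$ with $\combdeg{g}=\combdeg{f}$ is totally reduced'' is equivalent to $\deg(f)=\combdeg{f}$, i.e.\ that $f$ has no monomial of ordinary degree exceeding $c=\combdeg{f}$. That equivalence is false: the hypothesis constrains only the monomials whose \emph{combinatorial} degree is maximal, and says nothing about monomials of smaller combinatorial degree, which may well have larger ordinary degree. The paper's own example (\ref{Eq:StrangeForm}), $x_1x_2x_3x_4x_5+x_1^2x_2^2x_3^2x_4^2$ over $\gf{4}$, satisfies the hypothesis with $\combdeg{f}=5$ (the unique monomial of combinatorial degree $5$ is totally reduced, while the second monomial has combinatorial degree $4$) yet has $\deg(f)=8$. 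Consequently your chain $\combdeg{g^*}\le\deg(g^*)\le\deg(f^*)=\deg(f)=\combdeg{f}$ breaks at the step $\deg(f)=\combdeg{f}$, and the global degree bound via Lemma \ref{Lm:ChOB2} does not deliver the conclusion.

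The repair is to localize the degree comparison to individual monomials rather than to $f$ as a whole, which is what the paper does. For a monomial $g$ of $f$, let $h^*$ be the reduced polynomial obtained from $g$ by the change of basis and let $g^*$ be a summand of $h^*$; then $\combdeg{g^*}\le\combdeg{h^*}=\combdeg{g}\le\combdeg{f}=\combdeg{f^*}$ and $\deg(g^*)\le\deg(h^*)\le\deg(g)$. If $\combdeg{g^*}=\combdeg{f^*}$, the first chain forces $\combdeg{g}=\combdeg{f}$, so the hypothesis applies to $g$ and Corollary \ref{Cr:TotRed} gives $\deg(g)=\combdeg{g}$; then $\deg(g^*)\le\deg(g)=\combdeg{g}=\combdeg{g^*}\le\deg(g^*)$, and the same corollary shows $g^*$ is totally reduced. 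Note that this argument needs no reduction to elementary basis changes: the only facts used are that substituting linear forms and reducing cannot raise the degree of a monomial's image and cannot change its combinatorial degree (Corollary \ref{Cr:CombDeg}). Your observations about the scaling case and about reduction collapsing high powers are correct but do not close the hole above.
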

\begin{proof}
Let $g$ be a monomial of $f$. Let $h^*$ be the reduced polynomial obtained from
$g$ by the change of basis from $B$ to $B^*$, and let $g^*$ be a summand of
$h^*$. Then $\combdeg{g^*}\le
\combdeg{h^*}=\combdeg{g}\le\combdeg{f}=\combdeg{f^*}$ by Corollary
\ref{Cr:CombDeg}, and $\deg(g^*)\le\deg(h^*)\le\deg(g)$. If
$\combdeg{g^*}<\combdeg{f^*}$, there is nothing to prove. Assume therefore that
$\combdeg{g^*}=\combdeg{f^*}$. Then $\combdeg{g^*}=\combdeg{g}=\combdeg{f}$,
and so $g$ is totally reduced by assumption. By Corollary \ref{Cr:TotRed},
$\deg(g)=\combdeg{g}$. But then $\deg(g^*) \le \deg(g) =\combdeg{g} =
\combdeg{g^*}$, and the same corollary shows that $\deg(g^*)=\combdeg{g^*}$ and
that $g^*$ is totally reduced.
\end{proof}

The reader shall have no difficulty establishing:

\begin{lemma}\label{Lm:WellDefined2}
Let $\alpha:V\to F$ be a polynomial mapping satisfying $\alpha(0)=0$ and
realized with respect to the basis $B$ (respectively $B^*$) by the reduced
polynomial $f$ (respectively $f^*$). Assume that there is an integer $n$ such
that $0\ne\deg(g)\equiv n\pmod {|F|-1} )$ for every monomial $g$ of $f$. Then
$0\ne\deg(g^*)\equiv n\pmod {|F|-1}$ for every monomial $g^*$ of $f^*$.
\end{lemma}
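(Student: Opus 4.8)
The plan is to mimic the reduction-to-elementary-operations used in the proof of Lemma~\ref{Lm:ChOB2}. Since every change of basis is a product of permutations of the basis vectors, scalings $e_1^{*}=ce_1$ (with $c\ne 0$, $e_i^{*}=e_i$ for $i>1$), and shears $e_1^{*}=e_1+e_2$ ($e_i^{*}=e_i$ for $i>1$), and since the asserted property is transitive along a chain of bases, it suffices to check it for each of these three operations. A permutation only relabels variables, and a scaling sends each monomial $c(\mvar m)\mmon{x}{m}$ of $f$ to $c(\mvar m)c^{m_1}\mmon{x}{m}$, preserving the multiexponent, the nonzero coefficient, and reducedness; so in both cases $M(f^{*})=M(f)$ and there is nothing to prove. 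The entire content lies in the shear, for which $f^{*}(\mvar x)=f(x_1,x_1+x_2,x_3,\dots,x_d)$.

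For the shear I would argue one monomial of $f$ at a time, which is legitimate because both combinatorial reduction and the substitution $x_2\mapsto x_1+x_2$ are linear. Fix a monomial $g(\mvar x)=\mmon{x}{m}$ of $f$ and set $D=\deg(g)=m_1+\cdots+m_d$; by hypothesis $D\ne 0$ and $D\equiv n\pmod{|F|-1}$. By the binomial theorem,
\[
    g(x_1,x_1+x_2,x_3,\dots,x_d)=\sum_{k=0}^{m_2}\binom{m_2}{k}x_1^{m_1+k}x_2^{m_2-k}x_3^{m_3}\cdots x_d^{m_d},
\]
and every monomial on the right has total degree exactly $D$. Since $g$ is reduced, the exponents $m_2-k,m_3,\dots,m_d$ are already $<|F|$, while $m_1+k\le m_1+m_2<2|F|$; hence passing to the reduced polynomial only replaces an over-large exponent $m_1+k$ by its unique representative in $\{1,\dots,|F|-1\}$ congruent to it modulo $|F|-1$, and changes nothing when $m_1+k<|F|$.

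Now come the two facts that do all the work: such a replacement changes the total degree of a monomial by a multiple of $|F|-1$, and it never sends a positive exponent to $0$. Consequently every monomial of the reduced form of $g(x_1,x_1+x_2,x_3,\dots,x_d)$ has total degree $\equiv D\equiv n\pmod{|F|-1}$, and this degree is nonzero, since otherwise all its exponents would vanish, forcing $D=0$. Summing these reduced forms over the monomials $g$ of $f$ — all of whose degrees are $\equiv n\pmod{|F|-1}$ — and collecting like terms yields a polynomial with all exponents $<|F|$, i.e.\ a reduced polynomial realizing $\alpha$ with respect to $B^{*}$; by uniqueness (Lemma~\ref{Lm:Reduced}) it equals $f^{*}$. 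Every monomial of $f^{*}$ occurs in one of these reduced forms, so it has degree $\ne 0$ and $\equiv n\pmod{|F|-1}$. (When $|F|=\infty$ the hypothesis just says $f$ is homogeneous of degree $n\ge 1$; no reduction is ever triggered, the displayed expansion is homogeneous of degree $n$, and the same argument applies verbatim.)

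There is no genuine obstacle here: the argument is bookkeeping, and the only delicate point — and the only place the hypotheses are really used — is the reduction step, where one must observe that lowering an over-large exponent modulo $|F|-1$ shifts the total degree by a multiple of $|F|-1$ (so the residue class $n$ survives) and cannot collapse a monomial to a nonzero constant (so ``$\deg\ne 0$'' survives).
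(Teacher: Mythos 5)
Your proof is correct. The paper itself offers no argument for this lemma (it is left to the reader), and your proof supplies exactly the intended one: reduce to elementary basis changes as in the paper's proof of Lemma~\ref{Lm:ChOB2}, expand a shear by the binomial theorem, and observe that reduction modulo $\sim$ changes each exponent by a multiple of $|F|-1$ without ever sending a positive exponent to zero, so every monomial of $f^*$ inherits $0\ne\deg\equiv n\pmod{|F|-1}$. The only points worth being careful about --- possible cancellation when collecting like terms (harmless, since cancellation only deletes monomials) and the infinite-field case (where congruence modulo $|F|-1$ means equality) --- are both handled.
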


Let $B$ be a basis of $V$, $\alpha:V\to F$ a polynomial mapping, and $f$ the
unique reduced polynomial realizing $\alpha$ with respect to $B$. We say that
$\beta:V\to F$ is a \emph{monomial of $\alpha$} if $\beta$ is a polynomial
mapping realized by a monomial of $f$.

Thanks to Lemmas \ref{Lm:WellDefined} and \ref{Lm:WellDefined2}, we can safely
define the following subspaces of polynomial mappings $V\to F$ without having
to fix a basis of $V$:
\begin{align*}
    &\pl{V}{n} {=} \{\alpha;\;\combdeg{\alpha}\le n,\,\alpha(0)=0\},\\
    &\tpl{V}{n} {=} \{\alpha\in\pl{V}{n};\;\textrm{all monomials $\beta$
        with $\combdeg{\beta}{=}n$ are totally reduced}\},\\
    &\dpl{V}{n} {=} \{\alpha;\; \textrm{all monomials $\beta$ satisfy }
        0\ne\deg(\beta)\equiv n\pmod{|F|-1}\}.
\end{align*}
Note that $\pl{V}{n-1}\subseteq \tpl{V}{n}$.

\subsection{Polynomials satisfying $\alpha(au)=a^n\alpha(u)$}

\begin{proposition}\label{Pr:Condition}
Let $\alpha:V\to F$ be a polynomial mapping, and let $n\ge 1$. Then $\alpha$
satisfies $(\ref{Eq:nApp1})$ if and only if $\alpha\in\dpl{V}{n}$.
\end{proposition}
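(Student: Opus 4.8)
The plan is to reduce the statement to a computation on a single monomial, exploiting that $\df{n}{\alpha}$ is $F$-linear in $\alpha$ and that the condition (\ref{Eq:nApp1}) on $\alpha(au)=a^n\alpha(u)$ is, after passing to the reduced polynomial $f$ realizing $\alpha$, a condition about how each monomial scales under $x_i\mapsto ax_i$. Write $f(\mvar{x})=\sum_{\mvar{m}\in M(f)}c(\mvar{m})\mmon{x}{m}$, the reduced polynomial realizing $\alpha$ with respect to a fixed basis $\{e_1,\dots,e_d\}$. Then for $a\in F$ and $u=\sum_i a_ie_i$ we have $\alpha(au)=f(aa_1,\dots,aa_d)=\sum_{\mvar{m}}c(\mvar{m})a^{|\mvar{m}|}\mmon{a}{m}$ where $|\mvar{m}|=m_1+\cdots+m_d=\deg(\mmon{x}{m})$, while $a^n\alpha(u)=\sum_{\mvar{m}}c(\mvar{m})a^n\mmon{a}{m}$. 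So (\ref{Eq:nApp1}) says that the functions $a\mapsto a^{|\mvar{m}|}$ and $a\mapsto a^n$ induce, after multiplying by the monomial function $\mvar{a}\mapsto\mmon{a}{m}$ and summing over $\mvar{m}\in M(f)$, the same function on $V$.

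First I would treat the case where $\alpha$ is itself a monomial mapping, realized by $f(\mvar{x})=c\mmon{x}{m}$ with $c\neq0$. Since $f$ is reduced, $0\le m_i<|F|$ for all $i$; choose an $i$ with $m_i>0$ (which exists unless $f=0$, excluded since $\combdeg{f}\ge 1$ is implicit, or rather since we may assume $\alpha\neq 0$) and set all $a_j=0$ for $j\neq i$ and $a_i=1$, so $u=e_i$. Then (\ref{Eq:nApp1}) forces $a^{|\mvar{m}|}=a^n$ for all $a\in F$, hence (using $F^\times$ cyclic of order $|F|-1$ when $F$ is finite, and the fact that a polynomial identity $a^{|\mvar{m}|}=a^n$ over an infinite field forces $|\mvar{m}|=n$) we get $|\mvar{m}|\equiv n\pmod{|F|-1}$ and $|\mvar{m}|\ge 1$, i.e. $0\neq\deg(\mmon{x}{m})\equiv n\pmod{|F|-1}$. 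Conversely, if this congruence holds then $a^{|\mvar{m}|}=a^n$ for every $a\in F$ — for $a=0$ both sides are $0$ since $|\mvar{m}|\ge 1$ and $n\ge 1$, and for $a\in F^\times$ this is exactly $a^{|\mvar{m}|-n}=1$, valid since $|\mvar{m}|-n$ is a multiple of the order of $F^\times$ — so $\alpha(au)=a^n\alpha(u)$ for all $a,u$. This establishes the proposition for monomial mappings.

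For the general case I would argue that the condition (\ref{Eq:nApp1}) holds for $\alpha$ if and only if it holds for every monomial $\beta$ of $\alpha$. One direction is trivial: if each monomial satisfies $\beta(au)=a^n\beta(u)$ then so does their sum $\alpha$. For the other direction, fix $a\in F$ and consider the polynomial mapping $\alpha_a:u\mapsto\alpha(au)-a^n\alpha(u)$, which is realized by the reduced representative of $f(a\mvar{x})-a^nf(\mvar{x})=\sum_{\mvar{m}}c(\mvar{m})(a^{|\mvar{m}|}-a^n)\mmon{x}{m}$. Since $f$ is reduced, this last polynomial is already reduced (the substitution $x_i\mapsto ax_i$ does not change which exponents occur), so its reduced representative is itself; by Lemma~\ref{Lm:Reduced}, $\alpha_a\equiv 0$ forces every coefficient $c(\mvar{m})(a^{|\mvar{m}|}-a^n)$ to vanish. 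As this holds for every $a\in F$, each monomial $c(\mvar{m})\mmon{x}{m}$ with $c(\mvar{m})\neq0$ satisfies $a^{|\mvar{m}|}=a^n$ for all $a\in F$, i.e. the corresponding monomial mapping $\beta$ satisfies (\ref{Eq:nApp1}). By the monomial case this is equivalent to $0\neq\deg(\beta)\equiv n\pmod{|F|-1}$, and since this must hold for all monomials of $\alpha$ — and since by Lemma~\ref{Lm:WellDefined2} the set of such $\alpha$ is exactly $\dpl{V}{n}$, independently of the chosen basis — we conclude $\alpha$ satisfies (\ref{Eq:nApp1}) iff $\alpha\in\dpl{V}{n}$.

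The main obstacle is purely bookkeeping: making sure that when one substitutes $x_i\mapsto ax_i$ into a reduced polynomial the result is still reduced (so that Lemma~\ref{Lm:Reduced} can be invoked coefficient-by-coefficient), and correctly handling the boundary exponent $a=0$ together with the hypothesis $n\ge 1$ to rule out the degenerate possibility $\deg(\beta)=0$. Everything else is the cyclicity of $F^\times$ packaged into the definition of $\dpl{V}{n}$.
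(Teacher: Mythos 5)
Your proof is correct in substance, and it takes a genuinely different (and arguably cleaner) route than the paper's. Both arguments ultimately rest on Lemma \ref{Lm:Reduced}, but you apply it differently: you fix the scalar $a$ and observe that $f(a\mvar{x})-a^nf(\mvar{x})=\sum_{\mvar{m}}c(\mvar{m})(a^{|\mvar{m}|}-a^n)\mmon{x}{m}$ is still a reduced polynomial, so its vanishing as a function forces each coefficient to vanish; this decouples the monomials all at once and reduces everything to the scalar identity $a^{\deg}=a^n$ for all $a$, which is handled by cyclicity of $F^\times$ (and by $a=0$ together with $n\ge 1$ to force $\deg\ne 0$). The paper instead fixes a cleverly chosen vector and varies $a$: it partitions the monomials into those with degree $\equiv n\pmod{|F|-1}$ and the rest, picks from the latter a monomial with minimal variable support $X$, finds a vector $v$ supported on $X$ where the offending sum is nonzero, and then shows that the resulting univariate polynomial in $a$ does not reduce to zero. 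Your version isolates the useful intermediate fact that $(\ref{Eq:nApp1})$ holds for $\alpha$ if and only if it holds for each monomial of $\alpha$ separately, which the paper never states explicitly, and it avoids the minimal-support bookkeeping entirely. One small slip: in your standalone treatment of the monomial case you test $(\ref{Eq:nApp1})$ at $u=e_i$, but for a monomial involving more than one variable this gives $\alpha(e_i)=0$ and the identity becomes vacuous; you should take $u=\sum_{j:\,m_j>0}e_j$ instead. This does not affect the overall argument, since your third paragraph derives $a^{|\mvar{m}|}=a^n$ directly from the coefficientwise vanishing rather than from that test vector.
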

\begin{proof}
Suppose that $\alpha\in\dpl{V}{n}$. Then we can make $\alpha$ into a not
necessarily reduced homogeneous polynomial of degree $n+s(|F|-1)$ for some $s$,
and so $\alpha(au)= a^{n+s(|F|-1)}\alpha(u) = a^n\alpha(u)$.

Conversely, suppose that (\ref{Eq:nApp1}) holds. Let $B=\{e_1,\dots,e_d\}$ be a
fixed basis of $V$, and let $f$ be the reduced polynomial representing $\alpha$
with respect to $B$. Let $M$ be the set of all monomials of $f$, $M^+=\{g\in
M;\;\deg(g)=n+s(|F|-1)$, $s\ge 0\}$, and $M^- = M\setminus M^+$. If $M^-$ is
empty, we are done. Else let $\var{g}$ denote the set of variables present in a
monomial $g$, and let $X$ be a minimal element of $\{\var{g};\;g\in M^-\}$ with
respect to inclusion. Consider a vector $v=\sum_{x_i\in X} a_i e_i$ for some
$a_i\in F$. Let $g^+_1$, $\dots$, $g^+_r$ be all the monomials $g$ of $M^+$
satisfying $\var{g}\subseteq X$, and let $g^-_1$, $\dots$, $g^-_s$ be all the
monomials $g$ of $M^-$ satisfying $\var{g}\subseteq X$. Note that by the
minimality of $X$, $\var{g^-_i}=X$ for every $1\le i\le s$. Set $g^+ =
g^+_1+\cdots+g^+_r$ and $g^- = g^-_1+\cdots+g^-_s$. Let $t_i$ be the degree of
$g^-_i$. For a polynomial $h$, we write $h(v)$ instead of the formally correct
$h(a_1,\dots,a_d)$. Then
\begin{displaymath}
    \alpha(v) = g^+(v)+ g^-(v),
\end{displaymath}
and
\begin{displaymath}
    \alpha(a v) = a^ng^+(v) + a^{t_1}g^-_1(v)
    + \cdots + a^{t_s}g^-_s(v).
\end{displaymath}
On the other hand,
\begin{displaymath}
    a^n\alpha(v) = a^ng^+(v) + a^ng^-(v).
\end{displaymath}
Hence $\alpha(a v)=a^n\alpha(v)$ holds if and only if
\begin{displaymath}
    a^{t_1}g^-_1(v)+\cdots +a^{t_s}g^-_s(v) =
    a^ng^-(v).
\end{displaymath}
Note that $g^-$ is a reduced nonzero polynomial in variables $x_i\in X$. Hence,
by Lemma \ref{Lm:Reduced}, there exists $v=\sum_{x_i\in X}a_ie_i$ such that
$g^-(v)\ne 0$. Fix this vector $v$, and define a polynomial $h$ in one variable
by
\begin{displaymath}
    h(x) = x^ng^-(v)  - x^{t_1}g^-_1(v) - \cdots - x^{t_s}g^-_s(v).
\end{displaymath}
This polynomial is not necessarily reduced, but since $n-t_i\not\equiv
0\pmod{|F|-1}$ for every $1\le i\le s$ and $g^-(v)\ne 0$, it does not reduce to
a zero polynomial. Hence there is $a\in F$ such that $h(a)\ne 0$. But then
$\alpha(a v)\ne a^n\alpha(v)$ with this particular choice of $a$ and $v$, a
contradiction.
\end{proof}

\subsection{Polynomials with $n$-linear defect}

Let $\alpha:V\to F$ be a polynomial mapping of combinatorial degree $n$. Then
$\df{n}{\alpha}$ is a symmetric $n$-additive form. Under which conditions will
$\df{n}{\alpha}$ be $n$-linear? To answer this question, we start with an
example:

\begin{example}
Let $\alpha:\gf{4}\to\gf{4}$, $a\mapsto a^3$. Then there are two longest
regular chains for the (multi)exponent $3$, namely $3>2>0$ and $3>1>0$.
Accordingly, Lemma $\ref{Lm:Crucial}$ yields
\begin{displaymath}
    \df{2}{\alpha}(x,y) = \binom{3}{1}xy^2 + \binom{3}{2}x^2y.
\end{displaymath}
Then $\df{2}{\alpha}(x,ay) = 3xy^2a^2+3x^2ya$, and $a\df{2}{\alpha}(x,y)
=3xy^2a+3x^2ya$. Hence $\df{2}{\alpha}$ is bilinear if and only if $g(x,y,a) =
3xy^2a^2-3xy^2a = 0$ for every $a\in\gf{4}$. Since $g(x,y,a)$ is a reduced
nonzero polynomial (in variables $x$, $y$, $a$), it is a nonzero function by
Lemma $\ref{Lm:Reduced}$, and thus $\df{2}{\alpha}$ is not bilinear. Why did
this happen? Because not every longest regular chain for $3$ ends in $1$.
\end{example}

To resolve the general case, first deduce from Example \ref{Ex:Chain} and
Lemmas \ref{Lm:OneDimChains}, \ref{Lm:Chains}:

\begin{lemma}\label{Lm:LastLink}
Let $f\in F[\mvar{x}]$, $f(\mvar{x})=\mmon{x}{m}$, $\chr{F}=p$. Given a longest
regular chain for $\mvar{m}$, there is $j\ge 0$ such that the chain ends with a
multiexponent $(0$, $\dots$, $0$, $p^j$, $0$, $\dots$, $0)$. Moreover, $j=0$ in
every longest regular chain for $\mvar{m}$ if and only if $f$ is totally
reduced.
\end{lemma}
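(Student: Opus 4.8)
The plan is to reduce Lemma \ref{Lm:LastLink} entirely to the one-dimensional situation handled by Lemma \ref{Lm:OneDimChains}, exploiting the structural description of longest regular chains given by Lemma \ref{Lm:Chains}. First I would fix a longest regular chain $\mvar{m}=\mvar{m_1}>\mvar{m_2}>\cdots>\mvar{m_s}>\mvar{0}$ for $\mvar{m}$. By Lemma \ref{Lm:Chains}, consecutive members differ in exactly one coordinate, so the last step $\mvar{m_{s-1}}>\mvar{m_s}>\mvar{0}$ forces $\mvar{m_s}$ to be concentrated in a single coordinate, say the $i$th, i.e.\ $\mvar{m_s}=(0,\dots,0,c,0,\dots,0)$ with $c>0$ in position $i$. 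Restricting the chain to coordinate $i$ — that is, looking at the sequence of $i$th coordinates and deleting repeats — produces a regular chain for $(m_i)$ in the one-variable polynomial ring; by the argument of Lemma \ref{Lm:Chains} this restriction is still a \emph{longest} regular chain for $(m_i)$ (otherwise we could splice in extra links and lengthen the original chain, contradicting maximality). So it suffices to analyze the final link of a longest regular chain for a single exponent $m_i$, which is a statement about binomial coefficients modulo $p$.

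For the one-dimensional claim, recall from the proof of Lemma \ref{Lm:OneDimChains} that a regular chain for $n=\sum_k n_kp^k$ of maximal length $\weight_p(n)$ is obtained by decrementing, at each step, one of the nonzero $p$-adic digits $n_k$ by $1$, and that \emph{every} longest regular chain arises this way: by Lucas' Theorem, $\binom{a}{b}\not\equiv 0\pmod p$ forces $a_k\ge b_k$ for all $k$, so each link can reduce the total $p$-weight by at most $1$, and a maximal chain must reduce it by exactly $1$ at every link — which, given $a_k\ge b_k$ digitwise, means exactly one digit drops by one while the rest are unchanged. Consequently, at any stage the current multiexponent has the same $p$-adic digits as $n$ except that some have been lowered; the very last nonzero value in the chain has exactly one nonzero digit, equal to some $p^j$, where the digit in position $j$ of $n$ was nonzero. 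This is precisely the "$(0,\dots,0,p^j,0,\dots,0)$" conclusion of the lemma, once we transport it back to the $d$-variable setting. The residual position is $i$ and $j$ is the index of whichever $p$-adic digit of $m_i$ we chose to exhaust last.

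For the "moreover" part I argue in both directions. If $f$ is totally reduced, then every exponent satisfies $0\le m_i<p$, so the $p$-adic expansion of $m_i$ is the single digit $m_i$ itself and $\weight_p(m_i)=m_i$; any longest regular chain for $(m_i)$ decrements that single digit one unit at a time, so its last nonzero value is $p^0=1$, giving $j=0$ in every longest regular chain for $\mvar{m}$. Conversely, if $f$ is not totally reduced, pick a coordinate $i$ with $m_i\ge p$; then $m_i$ has a nonzero $p$-adic digit in some position $j\ge 1$, and one can build a longest regular chain for $(m_i)$ that exhausts all the \emph{other} digits first, arriving at $p^j$ as the last nonzero term — extend this to a longest regular chain for $\mvar{m}$ in the obvious way (reduce the other coordinates afterwards, as in Example \ref{Ex:Chain}), and that chain ends with a multiexponent of the form $(0,\dots,0,p^j,0,\dots,0)$ with $j\ge 1$. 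Hence $j=0$ in every longest regular chain if and only if $f$ is totally reduced. The only subtlety — and the one point worth being careful about — is the claim that the coordinatewise restriction of a globally longest regular chain is again a \emph{longest} regular chain for that coordinate; everything else is bookkeeping on top of Lemmas \ref{Lm:OneDimChains} and \ref{Lm:Chains}.
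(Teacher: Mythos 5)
Your argument is correct and follows exactly the route the paper intends (the paper states this lemma as a direct deduction from Lemmas \ref{Lm:OneDimChains} and \ref{Lm:Chains} without writing out the details): maximality forces each link of a longest regular chain to lower exactly one $p$-adic digit of one coordinate by one, so the last nonzero term has total $p$-weight $1$ and hence equals $p^j$ in a single coordinate, with $j=0$ forced in all chains precisely when every exponent is below $p$. Your reduction to the one-variable case via coordinatewise restriction is sound (the counting $\sum_i s_i=s=\sum_i\weight_p(m_i)$ with $s_i\le\weight_p(m_i)$ makes the ``splicing'' claim rigorous), so there is nothing to add.
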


\begin{proposition}\label{Pr:Linear}
Let $F$ be a field of characteristic $p\le\infty$, and $\alpha:V\to F$ a
polynomial mapping satisfying $\alpha(0)=0$. Then $\df{n}{\alpha}$ is a
characteristic $n$-linear form if and only if $\alpha\in\tpl{V}{n}$.
\end{proposition}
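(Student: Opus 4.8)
The plan is to transfer the question to the reduced polynomial $f$ realizing $\alpha$ with respect to a fixed basis $\{e_1,\dots,e_d\}$, and to read off homogeneity of $\df{n}{\alpha}$ from the defect formula of Lemma~\ref{Lm:Crucial} together with the description of the last links of longest regular chains in Lemma~\ref{Lm:LastLink}. First I would peel off the additivity half. Since $\df{n}{\alpha}$ is automatically characteristic by Lemma~\ref{Lm:NotRealized}, ``characteristic $n$-linear'' may be read simply as ``$n$-linear''; and an $n$-linear form is precisely a symmetric $n$-additive form $\varphi$ satisfying, in addition, $\varphi(au_1,u_2,\dots,u_n)=a\varphi(u_1,\dots,u_n)$. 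By the recurrence (\ref{Eq:Recurrence}) together with symmetry, $\df{n}{\alpha}$ is $n$-additive exactly when $\df{n+1}{\alpha}=0$, i.e.\ when $\combdeg{\alpha}\le n$, i.e.\ when $\alpha\in\pl{V}{n}$. As $\tpl{V}{n}\subseteq\pl{V}{n}$, both sides of the asserted equivalence fail if $\alpha\notin\pl{V}{n}$; so from now on I may assume $\alpha\in\pl{V}{n}$ and need only decide when the $n$-additive form $\df{n}{\alpha}$ is homogeneous in its first (hence, by symmetry, in every) slot.

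Write $f=\sum_g c_g\mmon{x}{m^{(g)}}$. By Corollary~\ref{Cr:Disjoint} each monomial $g$ has $\combdeg{g}\le n$, and only those with $\combdeg{g}=n$ contribute to $\df{n}{f}$. For such $g$, Lemma~\ref{Lm:Crucial} with $s=n$ writes $\df{n}{f}$ as a sum, over $g$ and over the longest regular chains $\mvar{m}^{(g)}=\mvar{m_1}>\cdots>\mvar{m_n}>\mvar{0}$, of the terms $c_g\binom{\mvar{m_1}}{\mvar{m_2}}\cdots\binom{\mvar{m_{n-1}}}{\mvar{m_n}}\,\mmon{x_1}{m_n}\mvar{x_2}^{\,\mvar{m_{n-1}}-\mvar{m_n}}\cdots\mvar{x_n}^{\,\mvar{m_1}-\mvar{m_2}}$; their scalar coefficients are nonzero because the chains are regular, and the monomials in $\mvar{x_1},\dots,\mvar{x_n}$ that occur are pairwise distinct — for distinct chains by the remark after Lemma~\ref{Lm:Crucial}, and for distinct $g$ by Lemma~\ref{Lm:Disjoint} — so each such monomial recovers its chain and hence $g$. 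The exponent of $\mvar{x_1}$ in such a term is the last link $\mvar{m_n}$, so replacing $\mvar{x_1}$ by $a\mvar{x_1}$ multiplies the term by $a^{|\mvar{m_n}|}$, where $|\mvar{m_n}|$ denotes the sum of the entries of $\mvar{m_n}$. Hence
\begin{displaymath}
    D:=\df{n}{f}(a\mvar{x_1},\mvar{x_2},\dots,\mvar{x_n})-a\,\df{n}{f}(\mvar{x_1},\dots,\mvar{x_n})
\end{displaymath}
is the sum of the nonzero scalar multiples of $(a^{|\mvar{m_n}|}-a)$ times the corresponding pairwise distinct monomials. By Lemma~\ref{Lm:LastLink}, $\mvar{m_n}=(0,\dots,0,p^j,0,\dots,0)$ for some $j\ge0$, so $|\mvar{m_n}|=p^j$; and since $f$ is reduced and $\mvar{m_n}\le\mvar{m}^{(g)}$ entrywise we have $p^j<|F|$. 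Therefore every exponent occurring in $D$ — that of $a$, which is $p^j$ or $1$, and those of the $\mvar{x_i}$, which are among the exponents of the reduced polynomial $\df{n}{f}$ (Lemma~\ref{Lm:Reducedfn}) — is smaller than $|F|$, so $D$ is itself a reduced polynomial.

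Now $\df{n}{\alpha}$ is homogeneous in the first slot if and only if $D$ is the zero function, equivalently, by Lemma~\ref{Lm:Reduced}, if and only if $D$ is the zero polynomial. As these monomials are distinct and their scalar coefficients are nonzero, $D$ is the zero polynomial if and only if $a^{p^j}-a=0$ in $F[a]$ for every last link $\mvar{m_n}$ that occurs, i.e.\ if and only if $j=0$ for every longest regular chain of every monomial $g$ of $f$ with $\combdeg{g}=n$. By the second assertion of Lemma~\ref{Lm:LastLink} this says precisely that each such $g$ is totally reduced, i.e.\ $\alpha\in\tpl{V}{n}$, which completes the proof. (When $\chr{F}=\infty$ every polynomial is totally reduced and $j=0$ throughout, so $D=0$ automatically, while every symmetric form in characteristic $\infty$ is characteristic; thus that case needs nothing extra.)

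The delicate point is the bookkeeping in the middle paragraph: one must be sure that, after the substitution, $D$ is genuinely reduced — so that its vanishing as a function forces its vanishing as a polynomial — and that no terms collapse, so that a single monomial of $f$ of combinatorial degree $n$ which is not totally reduced really does leave an uncancelled $(a^{p^j}-a)$-summand with $j\ge1$. Both facts rest on $f$ being reduced and on the observation, recorded right after Lemma~\ref{Lm:Crucial}, that a monomial of $\df{n}{f}$ remembers the chain that produced it.
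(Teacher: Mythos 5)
Your proposal is correct and follows essentially the same route as the paper's proof: reduce to the monomials of combinatorial degree $n$ (using Lemma \ref{Lm:Disjoint} to rule out cross-cancellation), apply Lemma \ref{Lm:Crucial} to see that each longest regular chain contributes $a^{p^j}$ versus $a^1$ under scaling of one slot, invoke Lemma \ref{Lm:LastLink} to identify $j=0$ for all chains with total reducedness, and finish with Lemma \ref{Lm:Reduced}. Your write-up merely makes explicit two points the paper leaves implicit — that $n$-additivity alone already forces $\alpha\in\pl{V}{n}$, and that the difference polynomial $D$ is genuinely reduced because $p^j<|F|$ — both of which check out.
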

\begin{proof}
By Lemma \ref{Lm:NotRealized}, every $\df{n}{\alpha}$ is characteristic. To
show the equivalence, it suffices to consider a monomial
$\alpha(\mvar{x})=\mmon{x}{m}$, by Lemma \ref{Lm:Disjoint}. If
$\combdeg{\alpha}<n$ then $\df{n}{\alpha}=0$, and vice versa.

Assume that $\combdeg{\alpha}=n$. Longest regular chains for $\mvar{m}$ satisfy
the conclusion of  Lemma \ref{Lm:LastLink}. Let $a\in F$. By Lemma
\ref{Lm:Crucial}, any longest regular chain with $j=0$ contributes the same
monomial to $\df{n}{\alpha}(a\mvar{x_1},\dots,\mvar{x_n})$ and to
$a\df{n}{\alpha}(\mvar{x_1},\dots,\mvar{x_n})$. On the other hand, every
longest regular chain with $j>0$ contributes to
$\df{n}{\alpha}(a\mvar{x_1},\dots,\mvar{x_n})$ by a monomial containing the
power $a^{p^j}$, while it contributes to
$a\df{n}{\alpha}(\mvar{x_1},\dots,\mvar{x_n})$ by a monomial containing the
power $a^1$. Hence, $\df{n}{\alpha}(\mvar{x_1},\dots,a\mvar{x_n}) -
a\df{n}{\alpha}(\mvar{x_1},\dots,\mvar{x_n})$ is a reduced polynomial that is
nonzero if and only if $\alpha$ is not totally reduced. We are then done by
Lemma \ref{Lm:Reduced}.
\end{proof}

\section{The correspondence}

Denote by $\mathcal A_n(V)$ the vector space of polynomial $n$-applications
$V\to F$ and by $\mathcal C_n(V)$ the vector space of characteristic $n$-linear
forms $V^n\to F$.

\begin{theorem}[Correspondence]
Let $V$ be a vector space over $F$. Then
\begin{equation}\label{Eq:Corr1}
    \mathcal A_n(V) = \tpl{V}{n}\cap \dpl{V}{n}
\end{equation}
and
\begin{equation}\label{Eq:Corr2}
    \mathcal C_n(V) \cong (\tpl{V}{n}\cap \dpl{V}{n})/(\pl{V}{n-1}\cap \dpl{V}{n}).
\end{equation}
\end{theorem}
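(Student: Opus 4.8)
The plan is to read off the set equality (\ref{Eq:Corr1}) directly from Propositions \ref{Pr:Condition} and \ref{Pr:Linear}, and then to obtain the isomorphism (\ref{Eq:Corr2}) by applying the first isomorphism theorem to the polarization map $\alpha\mapsto\df{n}{\alpha}$. No new computation should be needed: the analytic content is already carried by those two propositions and by Theorem \ref{Th:Realization}.

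For (\ref{Eq:Corr1}), suppose first that $\alpha\in\mathcal A_n(V)$, so $\alpha$ is a polynomial mapping satisfying (\ref{Eq:nApp1}) and (\ref{Eq:nApp2}). Setting $a=0$ in (\ref{Eq:nApp1}) gives $\alpha(0)=0$, and since an $n$-linear form is $n$-additive, the recurrence (\ref{Eq:Recurrence}) forces $\df{n+1}{\alpha}=0$, i.e. $\combdeg{\alpha}\le n$, so $\alpha\in\pl{V}{n}$. As $\df{n}{\alpha}$ is automatically symmetric and, by Lemma \ref{Lm:NotRealized}, automatically characteristic, condition (\ref{Eq:nApp2}) is equivalent to ``$\df{n}{\alpha}$ is a characteristic $n$-linear form'', hence to $\alpha\in\tpl{V}{n}$ by Proposition \ref{Pr:Linear}; and (\ref{Eq:nApp1}) is equivalent to $\alpha\in\dpl{V}{n}$ by Proposition \ref{Pr:Condition}. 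Conversely, any member of $\tpl{V}{n}\cap\dpl{V}{n}$ is a polynomial mapping vanishing at $0$ (because $\tpl{V}{n}\subseteq\pl{V}{n}$), and the same two propositions return (\ref{Eq:nApp1}) and (\ref{Eq:nApp2}). This establishes (\ref{Eq:Corr1}).

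For (\ref{Eq:Corr2}), I would consider $\Phi\colon\tpl{V}{n}\cap\dpl{V}{n}\to\mathcal C_n(V)$, $\Phi(\alpha)=\df{n}{\alpha}$, which is well defined by (\ref{Eq:Corr1}) and linear since combinatorial polarization is a linear process. Its kernel consists of the $\alpha$ in the domain with $\df{n}{\alpha}=0$, equivalently with $\combdeg{\alpha}\le n-1$; using $\pl{V}{n-1}\subseteq\tpl{V}{n}$ this kernel equals $\pl{V}{n-1}\cap\dpl{V}{n}$. The step I expect to be the crux is surjectivity. Given $\varphi\in\mathcal C_n(V)$, Theorem \ref{Th:Realization} furnishes $\alpha$ with $\df{n}{\alpha}=\varphi$ that is a homogeneous polynomial of degree $n$ with all exponents less than $\chr{F}$. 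Homogeneity of degree $n\ge 1$ gives $\alpha(0)=0$ and places $\alpha$ in $\dpl{V}{n}$, while ``all exponents less than $\chr{F}$'' says every monomial of $\alpha$ is totally reduced; by Corollary \ref{Cr:TotRed} each such monomial has combinatorial degree equal to its degree $n$, so $\combdeg{\alpha}=n$ by Corollary \ref{Cr:Disjoint} and $\alpha\in\tpl{V}{n}$. Hence $\alpha\in\tpl{V}{n}\cap\dpl{V}{n}$ and $\Phi(\alpha)=\varphi$, and the first isomorphism theorem yields (\ref{Eq:Corr2}).

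Beyond that surjectivity argument everything is bookkeeping, with the single subtlety that one should recall, via Lemmas \ref{Lm:WellDefined} and \ref{Lm:WellDefined2}, that $\tpl{V}{n}$, $\dpl{V}{n}$ and $\pl{V}{n-1}$ are genuinely basis-independent subspaces, so that the quotient in (\ref{Eq:Corr2}) is well posed.
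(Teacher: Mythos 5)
Your proposal is correct and follows essentially the same route as the paper: equality (\ref{Eq:Corr1}) is read off from Propositions \ref{Pr:Condition} and \ref{Pr:Linear}, and (\ref{Eq:Corr2}) comes from the first isomorphism theorem applied to the restriction of $\Delta^n$ to $\tpl{V}{n}\cap\dpl{V}{n}$, with surjectivity supplied by Theorem \ref{Th:Realization} and the kernel identified as $\pl{V}{n-1}\cap\dpl{V}{n}$. Your added verification that the realization from Theorem \ref{Th:Realization} actually lands in $\tpl{V}{n}\cap\dpl{V}{n}$ is a worthwhile detail the paper leaves implicit.
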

\begin{proof}
The equality (\ref{Eq:Corr1}) follows from Propositions \ref{Pr:Condition} and
\ref{Pr:Linear}. To prove (\ref{Eq:Corr2}), let $\Psi$ be the restriction of
the polarization operator $\Delta^n$ to $\tpl{V}{n}\cap \dpl{V}{n}$. By
Proposition \ref{Pr:Linear}, the image of $\Psi$ is contained in $\mathcal
C_n(V)$. By Theorem \ref{Th:Realization}, $\Psi$ is onto $\mathcal C_n(V)$.
Clearly, the kernel of $\Psi$ consists of $\tpl{V}{n}\cap \dpl{V}{n} \cap
\pl{V}{n-1} = \pl{V}{n-1}\cap \dpl{V}{n}$.
\end{proof}

\begin{corollary} Let $V$ be a $d$-dimensional vector space over a field $F$
with $\chr{F}=\infty$. Then $\mathcal A_n(V)$ are precisely the homogeneous
polynomials of degree $n$ in $d$ variables over $F$, and $\mathcal A_n(V)\cong
C_n(V)$.
\end{corollary}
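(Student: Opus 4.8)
The plan is to specialize the Correspondence Theorem to the case $\chr{F}=\infty$. First I would unwind what each of the three polynomial spaces $\pl{V}{n}$, $\tpl{V}{n}$, $\dpl{V}{n}$ becomes when $\chr{F}=\infty$. By Corollary \ref{Cr:CombDeg}, combinatorial degree of a polynomial mapping equals $\deg_p(f)$, which in characteristic $\infty$ is just the ordinary degree $\deg(f)$; hence $\pl{V}{n}$ consists exactly of polynomial mappings of degree $\le n$ vanishing at $0$. Next, since $|F|=\infty$ forces $|F|-1=\infty$, the congruence $\deg(\beta)\equiv n\pmod{|F|-1}$ collapses to the equality $\deg(\beta)=n$, so $\dpl{V}{n}$ is the set of polynomial mappings all of whose monomials have degree exactly $n$, i.e.\ the homogeneous polynomials of degree $n$ (note $0$-degree monomials are excluded, consistent with $\alpha(0)=0$). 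Finally, every monomial is totally reduced in characteristic $\infty$ (the condition $0\le m_i<\infty$ is vacuous), so $\tpl{V}{n}$ equals $\pl{V}{n}$ itself.

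Then I would feed these identifications into (\ref{Eq:Corr1}): $\mathcal A_n(V) = \tpl{V}{n}\cap\dpl{V}{n} = \pl{V}{n}\cap\dpl{V}{n}$, and since $\dpl{V}{n}$ already consists of homogeneous degree-$n$ polynomials (which automatically have degree $\le n$), the intersection is just $\dpl{V}{n}$ — the homogeneous polynomials of degree $n$ in $d$ variables. That gives the first assertion. For the isomorphism, I would look at (\ref{Eq:Corr2}): the denominator $\pl{V}{n-1}\cap\dpl{V}{n}$ consists of homogeneous degree-$n$ polynomials that also have degree $\le n-1$, which is impossible unless the polynomial is zero; so the denominator is trivial and the quotient collapses to $\tpl{V}{n}\cap\dpl{V}{n} = \mathcal A_n(V)$, yielding $\mathcal C_n(V)\cong\mathcal A_n(V)$.

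There is essentially no obstacle here — the corollary is a routine unpacking of definitions in the degenerate case $p=\infty$, $|F|=\infty$. The only point requiring a word of care is the degree-normalization: one should note explicitly that a nonzero homogeneous polynomial of degree $n$ cannot have degree $\le n-1$, which is what kills the denominator and makes the quotient in (\ref{Eq:Corr2}) an honest identification rather than a proper quotient. I would also remark that by Proposition \ref{Pr:DegreesCoincide} (Ward's result) the combinatorial degree of such a polynomial mapping coincides with its ordinary degree, which is implicitly what makes ``homogeneous of degree $n$'' the right description; but this is already baked into Corollary \ref{Cr:CombDeg}, so it needs no separate argument.
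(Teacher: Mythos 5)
Your proposal is correct and follows essentially the same route as the paper: identify $\dpl{V}{n}$ with the homogeneous degree-$n$ polynomials, use the coincidence of degree and combinatorial degree in characteristic $\infty$ to see that $\tpl{V}{n}=\pl{V}{n}$ and that the denominator $\pl{V}{n-1}\cap\dpl{V}{n}$ in (\ref{Eq:Corr2}) is trivial. The paper's own proof is just a terser version of exactly this unpacking.
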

\begin{proof}
Since $F$ is infinite, $\dpl{V}{n}$ consists of homogeneous polynomials of
degree $n$. The degree and combinatorial degree of polynomials coincide over
$F$, by Theorem \ref{Th:CombDeg}. Hence $\pl{V}{n-1}\cap\dpl{V}{n}$ is trivial.
As all polynomials over $F$ are totally reduced, we have $\tpl{V}{n} =
\pl{V}{n}$ and $\tpl{V}{n}\cap \dpl{V}{n} = \dpl{V}{n}$.
\end{proof}

It is not true in general that $\mathcal A_n(V) = \tpl{V}{n}\cap \dpl{V}{n}$
consists only of homogeneous polynomials of degree $n$, as was first noticed by
Pr\'oszy\'nski in the setting of $n$-applications (he did not work with
combinatorial degrees).

Consider the form $\alpha:\gf{4}^5\to\gf{4}$ defined by
\begin{equation}\label{Eq:StrangeForm}
    \alpha(x_1,x_2,x_3,x_4,x_5) =  x_1x_2x_3x_4x_5 + x_1^2x_2^2x_3^2x_4^2.
\end{equation}
Then $\combdeg{\alpha}=5$ and $\deg(\alpha)=8$. Moreover, the only monomial
$\beta$ of $\alpha$ satisfying $\combdeg{\beta}=5$ is totally reduced, and the
degree of every monomial of $\alpha$ differs from $5$ by a multiple of $3=4-1$.
Hence $\alpha$ is a $5$-application. It cannot be turned into a homogeneous
polynomial of degree $5$ by any change of basis, by Lemma \ref{Lm:ChOB2}. But
it can be made into a homogeneous polynomial of degree $8$, for instance the
polynomial
\begin{displaymath}
   x_1^4x_2x_3x_4x_5 + x_1^2x_2^2x_3^2x_4^2,
\end{displaymath}
no longer reduced.

It appears to be an interesting problem of number-theoretical flavor to
characterize all pairs $(V,n)$ for which $\tpl{V}{n}\cap \dpl{V}{n}$ does
contain only homogeneous polynomials of degree $n$. It is not our intention to
study this problem in detail here. Nevertheless we have the following result
that shows that something interesting happens during the transition from $n=4$
to $n=5$ (also see Sections $2$ and $3$ of \cite{Pr1}):

\begin{proposition}\label{Pr:NotAllHomo}
Let $|F|=q=p^e$ and let $V$ be a $d$-dimensional vector space over $F$. If
$n<5$ then $\tpl{V}{n}\cap \dpl{V}{n}$ consists of homogeneous polynomials of
degree $n$. If $n\ge \max\{5,q\}$, $e\ge 2$, and $d\ge n$ then $\tpl{V}{n}\cap
\dpl{V}{n}$ does not consist only of homogeneous polynomials of degree $n$.
\end{proposition}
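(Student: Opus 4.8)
The plan is to prove the two halves of Proposition~\ref{Pr:NotAllHomo} separately, the first by a counting argument on exponents and the second by an explicit construction generalizing the form~(\ref{Eq:StrangeForm}).

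\textbf{The case $n<5$.} Suppose $\alpha\in\tpl{V}{n}\cap\dpl{V}{n}$ and let $f$ be the reduced polynomial realizing $\alpha$ with respect to some basis. I would argue monomial by monomial, using Corollary~\ref{Cr:Disjoint}. Let $g=\mmon{x}{m}$ be a monomial of $f$. By membership in $\dpl{V}{n}$ we have $\deg(g)\equiv n\pmod{q-1}$ and $\deg(g)\neq 0$; by membership in $\tpl{V}{n}$, if $\combdeg{g}=n$ then $g$ is totally reduced, and then $\deg(g)=\combdeg{g}=n$ by Corollary~\ref{Cr:TotRed}. So it suffices to rule out the possibility that $\combdeg{g}<n$ while $\deg(g)>n$; since $\combdeg{g}\le\combdeg{f}=n$ always, this means $\combdeg{g}\le n-1$ and $\deg(g)\ge n+q-1$. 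I want to show this is impossible when $n<5$. Write $\combdeg{g}=\sum_i\weight_p(m_i)$ (Theorem~\ref{Th:CombDeg}) and $\deg(g)=\sum_i m_i$; since each $m_i\ge\weight_p(m_i)$ with equality iff $m_i<p$, and $m_i-\weight_p(m_i)$ is always a multiple of $p-1$, hence a multiple of... here I must be careful: we need $\deg(g)-\combdeg{g}=\sum_i(m_i-\weight_p(m_i))$ to be a multiple of $q-1=p^e-1$, not merely of $p-1$. The smallest positive value of $m_i-\weight_p(m_i)$ for a single exponent is $p-1$ (attained at $m_i=p$), but to get a multiple of $p^e-1$ with $e\ge 2$ one needs the $p$-weight to drop substantially: a short check shows $\deg(g)-\combdeg{g}\ge q-1=p^e-1\ge p^2-1$ forces $\deg(g)\ge n+p^2-1\ge n+3$, and combined with $\combdeg{g}\le n-1$ one needs the total weight deficit $\sum(m_i-\weight_p(m_i))\ge q-1$ while $\combdeg{g}\le n-1\le 3$; I would enumerate the finitely many ways this can happen for $\combdeg{g}\in\{1,2,3,4\}$ (since $q-1\ge 3$, each contributing exponent with $m_i>\weight_p(m_i)$ already uses up at least one unit of combinatorial degree, and one verifies no configuration with $\combdeg{g}\le 4$ achieves a weight deficit that is a nonzero multiple of $q-1$ unless $\deg(g)=\combdeg{g}$, i.e.\ unless $g$ is totally reduced — but then the congruence forces $\combdeg{g}=n$). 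For $e=1$ the statement is vacuous in the sense that $q-1=p-1$ and $\dpl{V}{n}\cap\tpl{V}{n}=\dpl{V}{n}\cap\pl{V}{n}$ behaves differently; actually the cleanest route is: $n<5$ means $n\le 4$, and one checks directly that any non-totally-reduced monomial of combinatorial degree $\le 4$ has degree at least $\combdeg{g}+(q-1)$ only when $q\le 5$, and then the congruence $\deg(g)\equiv n$ together with $\deg(g)>n$, $\combdeg{g}\le n\le 4 < q$ is contradictory. This bookkeeping is the part I expect to require the most care.

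\textbf{The case $n\ge\max\{5,q\}$, $e\ge 2$, $d\ge n$.} Here I would exhibit an explicit $\alpha$ of degree $>n$. Generalizing~(\ref{Eq:StrangeForm}), set
\begin{displaymath}
    \alpha(x_1,\dots,x_d) = x_1x_2\cdots x_n + x_1^{q}x_2^{q}\cdots x_{q-1}^{q}\,x_q\cdots x_{n-1}
\end{displaymath}
or, more transparently, take the first monomial $g_1=x_1\cdots x_n$ (totally reduced, degree $n$, so $\combdeg{g_1}=n$) and a second monomial $g_2$ obtained from a product of $n-1$ distinct variables among $x_1,\dots,x_{n-1}$ by raising one of them — or several — to the power $p$ so that $\deg(g_2)=n-1+k(p-1)$ equals $n$ plus a positive multiple of $q-1$. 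Concretely, since $q-1=p^e-1=(p-1)(1+p+\cdots+p^{e-1})$, raising $1+p+\cdots+p^{e-1}$ of the variables in an $(n-1)$-fold squarefree product from $x_1,\dots,x_{n-1}$ to the $p$-th power increases the degree by exactly $q-1$ while keeping $\combdeg{g_2}=n-1$ (each such variable has $p$-weight $1$, so the $p$-weight is unchanged); this requires $1+p+\cdots+p^{e-1}\le n-1$, which follows from $n\ge q=p^e>p^{e-1}+\cdots+1$ and $n\ge 5$ handling small cases (one must confirm $1+p+\cdots+p^{e-1}\le n-1$: since $1+p+\cdots+p^{e-1}<p^e=q\le n$, we get $\le n-1$, good). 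Then $\deg(g_2)=n+(q-1)\neq n$, $\combdeg{g_2}=n-1<n=\combdeg{g_1}$, so $\combdeg{\alpha}=n$ (Corollary~\ref{Cr:Disjoint}); the only monomial of combinatorial degree $n$ is $g_1$, which is totally reduced, so $\alpha\in\tpl{V}{n}$ (Proposition~\ref{Pr:Linear}); and $\deg(g_1)=n$, $\deg(g_2)=n+(q-1)$ are both $\equiv n\pmod{q-1}$ and nonzero, so $\alpha\in\dpl{V}{n}$ (Proposition~\ref{Pr:Condition}). Hence $\alpha\in\mathcal A_n(V)$ by~(\ref{Eq:Corr1}), but $\alpha$ is not homogeneous of degree $n$, and by Lemma~\ref{Lm:ChOB2} no change of basis makes it so since $\deg$ is a basis-independent invariant and $\deg(\alpha)=n+(q-1)>n$. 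The condition $e\ge 2$ is exactly what makes $q-1>p-1$ so that raising variables to the $p$-th power in unit steps of $p-1$ can be assembled into a step of $q-1$ without changing $p$-weight; when $e=1$ any increase of degree by $q-1=p-1$ via a single variable going from $1$ to $p$ also drops that variable's $p$-weight to $1$... no, its $p$-weight stays $1$, but then $\deg(g_2)=n-1+(p-1)=n+p-2\not\equiv n$ unless $p=2$ — the real obstruction for $e=1$ is more subtle and is precisely why the hypothesis $e\ge2$ appears.

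\textbf{Main obstacle.} The genuinely delicate part is the first case: proving that for $n\le 4$ no non-totally-reduced monomial can sneak into $\tpl{V}{n}\cap\dpl{V}{n}$. This is a finite but fiddly case analysis over the possible $p$-adic shapes of exponents with small $p$-weight, where one must track both $\sum m_i$ and $\sum\weight_p(m_i)$ and the divisibility by $q-1$; the key quantitative lemma is that a monomial with $\combdeg{g}=c$ and $\deg(g)>c$ has $\deg(g)\ge c+p-1$, with the gap being a sum of numbers each of the form (something) $\cdot(p-1)$, and one shows that forcing this gap to be a positive multiple of $q-1$ while $c\le 4$ is impossible when $q\ge 5$, and when $q\in\{3,4\}$ (the only cases with $c\le 4<q$ failing, roughly) a direct inspection closes it. I would isolate this as a preliminary lemma before assembling the proof.
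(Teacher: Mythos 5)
Your overall strategy matches the paper's (rule out low-combinatorial-degree monomials of large degree when $n\le 4$; build a two-monomial counterexample when $n\ge\max\{5,q\}$), but the explicit construction in your second part contains an arithmetic error that breaks it. You start from a squarefree product of $n-1$ variables, of degree $n-1$, and raise $1+p+\cdots+p^{e-1}$ of them to the $p$-th power, increasing the degree by $(1+p+\cdots+p^{e-1})(p-1)=q-1$. The resulting monomial $g_2$ then has $\deg(g_2)=(n-1)+(q-1)=n+q-2$, not $n+(q-1)$ as you claim, and $n+q-2\equiv n-1\not\equiv n\pmod{q-1}$ (unless $q=2$, excluded by $e\ge 2$). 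So your $\alpha$ fails the membership test for $\dpl{V}{n}$ of Proposition \ref{Pr:Condition} and is not an $n$-application; note that your recipe does not even reproduce the motivating example (\ref{Eq:StrangeForm}), whose second monomial $x_1^2x_2^2x_3^2x_4^2$ has all four variables squared, not $1+p=3$ of them. (Your first displayed formula is worse still: exponents equal to $q$ are not reduced, and $x^q=x$ as a function.) The repair is genuinely number-theoretic: you must produce a monomial whose degree is exactly $n+q-1$ while its combinatorial degree $\sum_i a_i$ (where $a_i$ variables carry exponent $p^i$) is strictly less than $n$; this amounts to writing $n+q-1=a_0+a_1p+\cdots+a_{e-1}p^{e-1}$ with $a_0+\cdots+a_{e-1}<n$, and verifying that this is possible requires the inequality $sp+r+(p-1)e\le n$ (with $n=sp^e+r$) plus a separate check of the boundary case $p=e=2$, $s=1$ — this is where the hypotheses $n\ge\max\{5,q\}$ and $e\ge 2$ actually get used, and it is absent from your argument.

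The first half is also not complete as written. Your reduction "we need $\deg(g)-\combdeg{g}$ to be a multiple of $q-1$" is incorrect: the constraint is $\deg(g)\equiv n\pmod{q-1}$ with $\combdeg{g}\le n-1$, and $n-\combdeg{g}$ need not vanish, so the deficit need not be divisible by $q-1$. The workable version of your idea is that a reduced monomial with $\combdeg{g}=m$ has each exponent equal to a sum of powers $p^i$ with $i<e$, hence $\deg(g)$ is a sum of $m$ powers of $p$ each at most $q/p$; comparing this upper bound $mq/p$ with the lower bound $\deg(g)\ge n+q-1$ for $m\le n-1\le 3$ kills all cases except $p=2$, $m=3$, $n=4$, $s=1$, which needs a short parity argument ($q+3$ odd forces one exponent-part to be $1$, leaving two powers of $2$ below $q$ summing to $q+2$, impossible). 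You correctly flag this bookkeeping as the delicate part, but you do not carry it out, and the organizing principle you propose for it would not close the cases.
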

\begin{proof}
Let $2\le n<5$, let $\alpha\in \tpl{V}{n}\cap \dpl{V}{n}$, and let $\beta$ be a
monomial of $\alpha$. We show that $\deg(\beta)\le n$. Assume that
$\deg(\beta)=n+s(q-1)$, $s>0$. When $\combdeg{\beta}=n$ then $\beta$ is totally
reduced since $\alpha\in\tpl{V}{n}$, and hence $\deg(\beta)=n$, a
contradiction. Assume therefore that $m=\combdeg{\beta}<n$.

If $m=1$, $\beta$ is a scalar multiple of $x^{p^i}$, $i<e$, and $p^i=n+s(q-1)$.
Since $s>0$, we have $p^i>q$, a contradiction with $i<e$. If $m=2$, $\beta$ is
a scalar multiple of $x^{p^i+p^j}$ or $x^{p^i}y^{p^j}$ for some $i\le j<e$, and
$p^i+p^j=n+s(q-1)$. Since $s>0$, we have $p^i+p^j>q$, thus $p^j>q/2$, so
$p^j\ge q$, a contradiction with $j<e$. If $m=3$, we have $n=4$, and $\beta$ is
a scalar multiple of $x^{p^i+p^j+p^k}$ or $x^{p^i+p^j}y^{p^k}$ or
$x^{p^i}y^{p^j}z^{p^k}$. We can assume that $i\le j\le k<e$, and
$p^i+p^j+p^k=4+s(q-1)$. Suppose that $s>1$. Then $p^i+p^j+p^k>2q$, thus
$p^k>q/2$, a contradiction with $k<e$. Suppose that $s=1$. Then
$p^i+p^j+p^k=q+3$. Since $p^k>q/3$, we must have $p=2$, else $p^k\ge q$. Then
$q+3$ is odd, $p^i=1$, $p^j+p^k>q$, $p^k>q/2$, a contradiction.

Now assume that $n\ge\max\{5,q\}$, $e\ge 2$, and $d\ge n$. Suppose for a while
that there are $0\le a_0$, $\dots$, $a_{e-1}$ such that
\begin{equation}\label{Eq:Cond}
    n+q-1 = a_0p^0 + \cdots + a_{e-1}p^{e-1},\quad
    a_0+\cdots + a_{e-1}<n.
\end{equation}
Since $d\ge n$, we can fix a basis of $V$ and define $\alpha:V\to F$ by setting
$\alpha(x_1,\dots,x_d)$ equal to
\begin{displaymath}
    x_1\cdots x_n + (x_1\cdots
    x_{a_0})(x_{a_0+1}^p\cdots x_{a_0+a_1}^p)\cdots
    (x_{a_0+\cdots+a_{e-2}+1}^{p^{e-1}}\cdots x_{a_0+\cdots+
    a_{e-1}}^{p^{e-1}}).
\end{displaymath}
Furthermore, $\alpha$ so defined satisfies $\deg(\alpha) = n+q-1$, and
\begin{displaymath}
    \combdeg{\alpha} = \max\{n,a_0+\cdots+a_{e-1}\} = n.
\end{displaymath}
The only monomial of $\alpha$ with combinatorial degree equal to
$\combdeg{\alpha}$ is totally reduced, and hence $\alpha$ is an $n$-application
but not a homogeneous polynomial of degree $n$. It remains to show that
(\ref{Eq:Cond}) can be satisfied.

Let $n=sp^e+r$, where $0\le r<p^e$ and $0<s$. Then $n+q-1 = sp^e + r + p^e - 1
= (sp)(p^{e-1}) + r + (p-1)(1+p+\cdots + p^{e-1})$. Hence it is possible to
write $n+q-1 = a_0p^0+ a_1p^1 + \cdots + a_{e-1}p^{e-1}$ with some $0\le a_i$
satisfying $a_0+\cdots + a_{e-1} \le sp+r+(p-1)e$. A short calculation shows
that $sp+r+(p-1)e\le n$ holds if and only if $e\le sp(1+p+\cdots + p^{e-2})$.
Since $e\ge 2$, we have $e\le p^{e-1}\le p(1+p+\cdots +p^{e-2})\le
sp(1+p+\cdots + p^{e-2})$, and the equality holds if and only if $e=2=p$ and
$s=1$.

Assume that $e=2=p$, $s=1$. Then $n\in\{5$, $6$, $7\}$, and it is easy to check
in each case that (\ref{Eq:Cond}) holds with a suitable choice of $a_0$, $a_1$.
(For $n=5$, we recover (\ref{Eq:StrangeForm}).)
\end{proof}

\section{Acknowledgement}

We thank the anonymous referee for several useful comments concerning the
structure of this paper and for pointing out the reference \cite{Greenberg}.

\end{document}